\newtheorem{theorem}{Theorem}[section]
\newtheorem{lemma}[theorem]{Lemma}
\newtheorem{corollary}[theorem]{Corollary}
\theoremstyle{definition}
\newtheorem{remark}[theorem]{Remark}
\newtheorem{remarks}[theorem]{Remarks}
\newtheorem{definition}[theorem]{Definition}
\newcommand \BDOTS {$~~\cdot~\cdot~\cdot~~$}
\newcommand \VPH   {\vphantom{$L^{L^L}$}}
\newcommand \Vph   {\vphantom{$L^L$}}
\newcommand \Vqh   {\vphantom{$L_q$}}
\def\op{\operatorname}
\def\subhead{\smallskip\noindent\textbf}
\def\ft{\footnote}
\def\bcdot{\cdot}
\begin{document}
\selectlanguage{english}



\title[The absolute of finitely generated groups. II]{The absolute of finitely generated groups: II.~The Laplacian and degenerate part}
\thanks{Supported by the RSF grant 17-71-20153.}
\author{A.~M.~Vershik, A.~V.~Malyutin}
\address{St.~Petersburg Department of Steklov Institute of Mathematics; St.~Petersburg State University; Institute for Information Transmission Problems, Moscow, e-mail: {\tt avershik@gmail.com}.}
\address{St.~Petersburg Department of Steklov Institute of Mathematics; St.~Petersburg State University, e-mail: {\tt malyutin@pdmi.ras.ru}.}

\begin{abstract}
The article continues the series of papers on the absolute of finitely generated groups. The absolute of a group with a fixed system of generators is defined as the set of ergodic Markov measures for which the system of   \textit{cotransition probabilities} is the same as for the simple (right) random walk generated by the uniform distribution on the generators. The absolute is a~new boundary of a group, generated by random walks on the group.

We divide the absolute into the Laplacian part and degenerate part and describe the connection between the absolute, homogeneous Markov processes, and the Laplace operator; prove that the Laplacian part is preserved under taking some central extensions of groups; reduce the computation of the Laplacian part of the absolute of a nilpotent group to that of its abelianization; consider a number of fundamental examples (free groups, commutative groups, the discrete Heisenberg group).

\smallskip
Keywords: absolute, Laplace operator, dynamic Cayley graph, nilpotent groups, Laplacian part of absolute.

\vskip2pt

{\Small DOI: {\tt https://doi.org/10.4213/faa3593}.}
\end{abstract}

\maketitle
\section{Introduction}

In this article, we continue to study the notion of the absolute of discrete groups and semigroups with a fixed system of generators, see the previous papers
\cite{V14b}--\cite{VM15}, \cite{VM17}, \cite{V14a},  \cite{VM18}. It appeared as a natural generalization of the well-known notions of  Poisson--Furstenberg boundary, Martin boundary, etc.\ for random walks on groups and semigroups and the exit boundary of the corresponding Markov chains. Unlike the Poisson--Furstenberg boundary, which is defined as a measure space, the absolute, like the Martin boundary,  is a topological boundary. On the other hand, the absolute of a group is a special case of the general notion of the absolute of a graded graph (or branching graph, or Bratteli diagram, which are different names for the same notion), i.e., the set of ergodic central measures on the Cantor-like set of infinite paths in  the so-called dynamic Cayley graph. All these terms are defined below, and here we explain the purpose of the suggested theory.

The theory of random walks on groups and the asymptotic theory of trajectories of these random walks have always been closely related to the theory of homogeneous Markov chains and harmonic analysis on groups, in particular, to the study of Laplace operators on groups, which are, in some sense or other, generators of Markov chains. The correspondence between the set of harmonic functions on a group with respect to a given Laplacian and the Poisson--Furstenberg (PF) boundary of the Markov chain is now well known and for the most part studied reasonably thoroughly (see \cite{V15}, \cite{D69}, \cite{GR85}--\cite{KW02}, and the references therein). However, this correspondence, for all its importance, reflects the asymptotic properties of the walk  only very roughly. First, for many groups the PF boundary, i.e., the set of harmonic positive functions, consists only of the constants; this is the case for Abelian, nilpotent, and some other groups. Second, the correspondence itself between the Laplacian and the asymptotic behavior of trajectories of the random walk is sometimes not defined, since for many Markov chains the notion of generator needs to be generalized and there is not yet an appropriate generalization of the traditional Laplace operator. Our key notion, that of the absolute of a random walk on a~group or semigroup with given generators, is defined not in terms of classes of functions on groups related to the Laplacian, but in terms of Markov measures (i.e., random walks on groups) having \textit{the same cotransition probabilities} as the simple random walk determined by the collection of generators. The PF boundary is only a part of the Laplacian absolute, namely, this is a space with a (harmonic) measure which realizes the decomposition of the Markov measure determined by the simple random walk into ergodic components. The whole absolute and, in particular, its Laplacian part contains much more information on the asymptotic properties of the group. E.~B.~Dynkin was probably the first to state, in some special cases, the problem of describing the Markov measures having the same cotransition probabilities as a~given Markov measure. 

We retain the term ``Laplacian absolute'' for a certain part of the absolute, namely, for the part consisting of nondegenerate measures; in this case, our generalization of the classical theory consists just in considering simultaneously not only harmonic functions, as usual, but also all minimal positive eigenfunctions of the Laplace operator. The main results of the paper concern primarily the Laplacian part of the absolute. In the relevant papers \cite{Mar66}, \cite{Mol67}, \cite{Wo00} we know of, this problem was posed both in the general setting \cite{Mol67}, and specifically for nilpotent groups (\cite{Mar66}, \cite{Wo00}). It turns out (see Theorem~\ref{th:ext-FC}) that one can give precise geometric conditions on an epimorphism of groups that guarantee the preservation of the Laplacian part of the absolute under taking the corresponding quotient. This provides a generalization and a new proof of the well-known Margulis' theorem on nilpotent groups~\cite{Mol67}. Theorem~\ref{th:ext-FC} implies that the Laplacian part of the absolute of a nilpotent group coincides with that of its abelianization.

The general problem of describing even the Laplacian part of the absolute of an arbitrary discrete finitely generated group is difficult, and it is not yet clear how wide is the class of groups for which it can be reduced to finding the PF boundary. We have demonstrated such a reduction for free groups~\cite{VM15}: in this case, the Laplacian part of the absolute is the product of the PF boundary and a~semiopen interval. We cite this result in the section containing examples. Note that the paper~\cite{VM18} is devoted to the problem of describing the absolute for commutative groups. Together with Theorem~\ref{th:ext-FC} of the present paper, this partially exhausts the problem in the case of nilpotent groups, but only for the Laplacian part of the absolute.

By definition, the degenerate part of the absolute consists  of the central ergodic measures for which some cylinder sets have zero measure. Another way to define degeneracy is to say that the support of the measure is a proper ideal in the path space of the graph (see below). A proper ideal of a dynamic Cayley graph may not be the dynamic graph of any group, hence the description of degenerate central measures in some way or other implicates finding the central measures for an arbitrary branching graph. Nevertheless, the study of the degenerate part of the absolute for the Heisenberg group shows that the approach from the viewpoint of geometric group theory proves to be useful also in this case: these problems are closely related to group-geometric problems (the theory of geodesics in groups), as well as to the theory of filtrations of $\sigma$-algebras and, of course, to harmonic analysis on groups.

The paper is organized as follows.

Section~2 contains numerous definitions and results related to the notion of absolute, but primarily those that are needed for what follows. For more details, see the literature cited above.

In Section~3 we study the correspondence between central measures and positive eigenfunctions of the Laplace operator. We prove (Lemma~\ref{cor:Markov-time}) that every measure from the Laplacian absolute is \textit{homogeneous} in an appropriate sense, and establish a bijective correspondence between the homogeneous nondegenerate central measures and the classes of proportional positive eigenfunctions of the Laplace operator (Claim\,2 of Theorem~\ref{th:measures-functions}). As a consequence, we obtain one of the main results of the paper: Claim\,1 of Theorem~\ref{th:measures-functions} saying that the measures from the Laplacian absolute are in a bijective correspondence with the classes of proportional minimal positive eigenfunctions of the Laplace operator.

In Section~4 we introduce the notion of a totally distorted subgroup and prove that taking the quotient of a group by a totally distorted subgroup whose all elements have finite conjugacy classes in the original group changes neither the collection of eigenfunctions of the Laplacian (Lemma~\ref{lem:eigenfunctions-invariant}), nor the Laplacian part of the absolute (Theorem~\ref{th:ext-FC}). Theorem~\ref{th:ext-FC} implies Corollary~\ref{cor:nilpotent} saying that the Laplacian part of the absolute of a nilpotent group coincides with that of its abelianization.

Section~5 contains results on the degenerate part of the absolute. The main result here is Theorem ~\ref{lem:degenerate-simple} saying that for every finitely generated group, for every  collection of generators, all degenerate central measures are concentrated on paths whose projections to the Cayley graph contain no cycles. It follows that in many interesting cases all degenerate central measures are concentrated on paths whose projections to the Cayley graph are geodesics (Corollary~\ref{cor:degenerate-geodesic}).

In Section~6 we give examples of describing the absolute, considering the cases of free groups, the discrete Heisenberg group, commutative and finite groups. For free groups and homogeneous trees,  the absolute  was described in our paper~\cite{VM15}. Here we briefly reproduce the main result, presenting its illustrative formulation. For the discrete Heisenberg group, we only state the result, its proof will be given in a~separate paper. The case of commutative groups was considered in~\cite{VM18}.

A detailed study of the absolute (including the degenerate part) for nilpotent and other classes of groups will be the subject of later papers by the authors.

\section{Necessary definitions}

We briefly give definitions of (old or new) notions used in what follows. For more detailed information on branching graphs and absolutes, not necessarily related to groups, see the papers cited in the introduction.

\subhead{2.1. Graphs, branching graphs, the Cayley and dynamic graph of a~group.}
By a \textit{graph} we mean a locally finite directed graph with a marked vertex. Loops and multiple edges are allowed. A \textit{path} in a graph is a (finite or infinite) sequence of alternating vertices and edges of the form
$$
v_0, e_1, v_1, e_2, \dots, e_n, v_n
$$
where $e_k$ is an edge with initial vertex~$v_{k-1}$ and terminal vertex~$v_k$ (both vertices and edges may repeat). We consider graphs in which every path can be extended to an infinite path. A special class of graphs is that of branching graphs. A \textit{branching graph} is a graph in which the set of paths from the marked vertex~$v_0$ to every vertex~$v$ is nonempty (in this case, one says that $v$ is  \textit{reachable} from~$v_0$) and all these paths have the same length. On the set of vertices of a branching graph there is a natural grading by the distance to  the marked vertex. Such graphs are also called (locally finite) \textit{${\mathbb N}$-graded graphs} or \textit{Bratteli diagrams}.

A graph~$\Gamma$ with a marked vertex~$v_0$ gives rise in a canonical way to the \textit{dynamic graph} $\op{D}_{v_0}(\Gamma)$, a branching graph constructed as follows. The $n$th level of $\op{D}_{v_0}(\Gamma)$ is a copy of the set of vertices of~$\Gamma$ connected with the marked vertex~$v_0$ by paths of length~$n$. Given vertices $v_1$ and $v_2$ in~$\op{D}_{v_0}(\Gamma)$, they are connected by exactly $k$ edges directed from~$v_1$ to~$v_2$  if and only if $v_2$ lies in the next level of~$\op{D}_{v_0}(\Gamma)$ as compared to~$v_1$, and the vertex~$w_1$ of~$\Gamma$ that corresponds to~$v_1$ is connected with the vertex~$w_2$ of~$\Gamma$ that corresponds to~$v_2$ by exactly $k$ edges. Note that a graph~$\Gamma$ coincides with its dynamic graph~$\op{D}_{v_0}(\Gamma)$ if and only if $\Gamma$ is a branching graph.

Let us apply the above definitions to groups.  The notion of the Cayley graph of a (semi)group with a chosen collection of generators is well known. By a collection of generators in a group~$G$ we mean a subset of~$G$ generating it as a semigroup. It is not necessarily symmetric and does not necessarily contain the identity. Note that the results of this paper extend automatically to the case of collections of generators with multiplicities or weights. The \textit{dynamic {\rm(}Cayley{\rm)} graph} of a group~$G$ with a~chosen collection of generators~$S$, in what follows denoted by $\op{D}(G,S)$, is the dynamic graph (in the sense of the above definition) constructed from the Cayley graph of the pair~$(G,S)$.

\subhead{2.2. The path space of a branching graph, central measures, ergodicity, absolute.}
Let $\op{D}$ be a branching graph. Denote by $T(\op{D})$ the set of all infinite paths in~$\op{D}$ from the marked vertex. It is equipped with the weak (projective limit) topology, defined in a natural way, in which  $T(\op{D})$ is compact. Denote by~$\mathcal{M}(\op{D})$ the set of Borel probability measures on this space. A measure~$\nu$ on~$T(\op{D})$ is called \textit{central} if for almost every (with respect to~$\nu$) path~$t$, the conditional measure on the set of paths that differ from~$t$ in finitely many places is uniform. In another terminology, this means that the tail equivalence relation is  \textit{semihomogeneous}. An equivalent definition of centrality is as follows: a measure on~$T(\op{D})$ is central if for every vertex~$v$ of~$\op{D}$, the probabilities of all finite paths from the marked vertex to~$v$ corresponding to this measure are equal. One can easily show that all central measures are Markovian. The central measures constitute a convex compactum~$\mathcal{C}(\op{D})$, which is a simplex (see~\cite{V15}) in~$\mathcal{M}(\op{D})$. The simplex~$\mathcal{C}(\op{D})$ is the projective limit of a sequence of finite-dimensional simplices of measures on finite paths; see~\cite{V15}. Here we do not use this fact.

A central measure is called \textit{ergodic} (or \textit{regular}) if it is an extreme point of the simplex~$\mathcal{C}(\op{D})$. The \textit{absolute} of a branching graph is the set of all ergodic central measures on the compactum~$T(\op{D})$ of infinite paths starting at the marked point.

Now we will apply all these notions to the dynamic graph of a group.

\begin{definition}
The \textit{absolute of a finitely generated group with a fixed finite collection of generators} is the absolute of the corresponding dynamic graph.
\end{definition}

The above definitions are equivalent to the following definition in terms close to the theory of random walks. The absolute of a group~$G$ is the set of Markov measures generated by ergodic random walks on the Cayley graph of~$G$ and satisfying the following property: for every~$n$ and for every element of~$G$ representable as a~product of $n$ generators, the conditional measure on its representations as a~product of $n$ generators is uniform. For stationary Markov measures, the notion of a~central measure coincides with the well-known notion of a measure of maximal entropy. Note that the ergodicity of a measure, defined above in terms of the impossibility of writing it as a nontrivial convex combination of other central measures, can be defined directly, in terms of the intersection of the $\sigma$-algebras of pasts (see Section~2.5).

The absolute of a group~$G$ with a collection of generators~$S$ is denoted by~$\mathcal{A}(G,S)$.

\subhead{2.3. The Laplace operator and its eigenfunctions.}
Given a group $G$ and a~collection of generators~$S$, we define the \textit{Laplace operator} acting on the space of all functions on~$G$ as the linear operator given by the formula
\begin{equation*}
(\Delta_Sf)(g):=\frac{\sum_{s\in S}f(gs)}{|S|}\,.
\end{equation*}

We will be interested in the action of~$\Delta_S$ on nonnegative functions and its eigenfunctions corresponding to nonnegative eigenvalues.

A nonnegative eigenfunction~$f$ of an operator~$A$ is called \textit{minimal} if every nonnegative eigenfunction of~$A$ with the same eigenvalue dominated by~$f$ is proportional to~$f$.

The relationships between the operator~$\Delta_S$, its eigenfunctions, Markov measures, and random walks on groups will be studied in more detail in the next section.

\subhead{2.4. The nondegenerate and degenerate parts of the absolute.}
A measure~$\nu$ on the path space of a branching graph will be called \textit{nondegenerate} if the probability of every finite path (i.e., every cylinder set) is nonzero. The \textit{main part} of the absolute is its subset  consisting of nondegenerate measures. In the group case, the main part of the absolute is called the  \textit{Laplacian part}, or simply \textit{Laplacian absolute}. The set of degenerate ergodic central measures will be called the \textit{degenerate part}  of the absolute. It is related to the geometry of groups and the geometry of paths in Cayley graphs.

Every graded graph determines a partial order on its vertices. An \textit{ideal} of this partial order is a subset of vertices that contains, along with every vertex~$v$, all vertices smaller than~$v$. One can consider the path space of a~given ideal, as a~closed subset in the space of all paths. It is easy to see that the definition of degenerate and nondegenerate central measures can be reformulated using the notion of ideal as follows. The support of every central measure (as a closed set in the path space) is always the space of all paths of some ideal. If this ideal coincides with the whole path space, then the measure is nondegenerate; if the ideal is proper, then the measure is degenerate. Hence in order to find the whole absolute, one has to find the absolutes of some proper subgraphs of the dynamic graph. The problem of describing the ideals for subgraphs of a given graph is of independent interest.

\subhead{2.5. Additional structures and comments.}

\smallskip
\textbf{A filtration structure.} We consider the tail equivalence relation on the path space of a graph (for instance, a branching graph). Here it is convenient to use the framework of the theory of filtrations (see~\cite{V17}). The \textit{tail filtration on the path space of a graph}, or, in short, the \textit{tail filtration}, is defined as follows. Two paths are said to be \textit{$n$-equivalent} if they coincide from the $n$th vertex, and  \textit{tail equivalent} if they are $n$-equivalent for some~$n$. Let $\mathfrak{A}_n$, $n\in{\mathbb N}_0$, be the $\sigma$-algebra of Borel subsets in the compactum of infinite paths that contain, along with every path, all $n$-equivalent paths. The decreasing sequence 
$$
\mathfrak{A}_0 \supset \mathfrak{A}_1 \supset \mathfrak{A}_2 \supset \dots
$$
 of $\sigma$-subalgebras of Borel sets is called the \textit{tail filtration on the path space}. The equivalence classes described above are also called  \textit{blocks}, or \textit{elements}, of the filtration. The filtration is called \textit{ergodic} if the intersection of the above $\sigma$-algebras is trivial. An \textit{automorphism of the filtration} is an automorphism of the measure space preserving the tail equivalence classes. In these terms, a \textit{central measure} is a~measure invariant under all automorphisms of the tail filtration.

\smallskip
\textbf{Generalizations: equipped multigraphs.} In the present paper, we consider primarily simple walks related to the uniform distribution on the collection of generators, but one can also consider the absolute in a more general setting depending on a cocycle (for more details on equipped multigraphs and their cocycles, see Sections~3.3 and~3.5 in~\cite{V17}). The generalization to the case of multigraphs will be needed in Section~6.2.

\smallskip
\textbf{The compactum of infinite words.}
An alternative approach to the theory of absolutes of groups and semigroups is provided by an algebraic structure: the dynamic graph of a group is the Cayley graph of a graded semigroup, and the path space of the dynamic graph is canonically isomorphic to the space of infinite words in the alphabet of generators (see~\cite{Eetal92}). In this paper, we use this approach in the example from Section~5 and in Section~6.4 about the discrete Heisenberg group. Within this approach, when considering finite and infinite sequences of elements of a set, we say that this set is an \textit{alphabet}, its elements are \textit{symbols}, or \textit{letters}, and sequences of elements are  \textit{words} over this alphabet. For presentations of groups, we use alphabets consisting of pairs of letters of the form~$\{a, a^{-1}\}$; such letters are called \textit{inverse} to each other. A \textit{word without inverse letters} is a word in which no two letters are inverse (not only neighboring letters are taken into account). Words are written without commas, a block of $n$ successive letters~$a$ is denoted by~$a^n$, a block of $n$ successive letters~$a^{-1}$ is denoted by~$a^{-n}$. The notation $(a^{-1})^{-1}$ is interpreted as~$a$,  and $a^0$ stands for the empty (sub)word. The \textit{inverse} word to $w_1\dots w_n$ is $w_n^{-1}\dots w_1^{-1}$. The notation~$a^{+\infty}$ (respectively, $a^{-\infty}$) corresponds to the right-infinite word whose every letter is~$a$ (respectively, $a^{-1}$).

\section{Homogeneous measures, Laplace operator, and Laplacian absolute}

In this section we establish a connection between homogeneous Markov chains on a group~$G$ (more exactly, on the Cayley and dynamic graphs) 
and functions associated with the Laplace operator on~$G$. This connection is well known for harmonic functions, i.e., functions invariant under the Laplace operator; it is the basis of harmonic analysis on groups. For eigenfunctions, the connection is less known and less researched; perhaps, one of the first papers on the subject, using a somewhat different viewpoint, was S.~A.~Molchanov's paper~\cite{Mol67}. We treat the subject systematically and refine the correspondence between properties of eigenfunctions and their linear combinations (positivity, minimality, etc.)\ on the one hand, and properties of Markov chains (homogeneity, centrality, ergodicity, etc.)\ on the other hand. It is this correspondence that provides a link between the nondegenerate part of the absolute and the theory of the Laplace operator. Note that it is the absence of such a correspondence for degenerate Markov chains that causes difficulties in the study of the degenerate part of the absolute.

Before proving the main theorem, we discuss the notion of homogeneity. Let $G$ be a finitely generated group and $S$ be a finite collection of generators of~$G$. We construct the dynamic graph~$\op{D}(G,S)$ corresponding to the pair~$(G,S)$ and consider central measures on the compactum of infinite paths of this graph. It is easy to verify (see the introduction) that the random processes corresponding to central measures, both on the dynamic graph and on the Cayley graph, are Markovian.

\begin{definition}
Measures on the path space of the dynamic graph that give rise to time-homogeneous Markov chains on the Cayley graph will be called \textit{homogeneous}. Recall that a Markov chain is called \textit{time-homogeneous} if its transition probabilities do not depend on time. (This terminology causes no ambiguity, since there are no time-inhomogeneous Markov processes on the dynamic graph.)
\end{definition}

\begin{lemma}[homogeneity lemma]
\label{cor:Markov-time}
All points of the Laplacian absolute, i.e., all nondegenerate ergodic central measures on the path space of the dynamic graph are homogeneous. In other words, every such measure is generated by a homogeneous Markov measure on the group.
\end{lemma}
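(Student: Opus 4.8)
The plan is to work with the function-theoretic encoding of central measures and to reduce homogeneity to the level-independence of transition probabilities. First I would record that a central measure $\nu$ is determined by the numbers $c_n(g)$, the common $\nu$-measure of the cylinder cut out by any single finite path from the root to the copy of $g\in G$ at level $n$; centrality is exactly the well-definedness of $c_n(g)$, and the consistency of $\nu$ across levels gives the backward recursion $c_n(g)=\sum_{s\in S}c_{n+1}(gs)$. Nondegeneracy says $c_n(g)>0$ for every $g$ reachable at level $n$, so the projected Markov chain $(g_n)$ on the Cayley graph has well-defined one-step transition probabilities $q_n(g,s)=c_{n+1}(gs)/c_n(g)$ for the move $g\mapsto gs$. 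With this notation, ``$\nu$ is homogeneous'' is literally the assertion that $q_n(g,s)$ depends only on $g$ and $s$, not on $n$; this is what must be proved.

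Next I would bring in ergodicity. Being an extreme point of the simplex of central measures is equivalent to triviality of the tail, i.e.\ of $\bigcap_n \mathfrak{A}_n$ (Section~2.5), and for such measures the ergodic method applies: $\nu$ is the weak limit of the uniform measures on finite paths through a $\nu$-typical vertex $z_k=g_k(\omega)$, so that
\[
c_n(g)=\lim_{k\to\infty}\frac{p_{k-n}(g^{-1}z_k)}{p_k(z_k)},
\]
where $p_\ell(x)$ denotes the number of length-$\ell$ factorizations of $x$ over $S$ (equivalently, the number of paths from the root to the copy of $x$ at level $\ell$). The key group-theoretic input is that path counts are left-translation covariant: the number of length-$(k-n)$ paths from $g$ at level $n$ to $z_k$ equals the number of words of that length representing $g^{-1}z_k$. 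Substituting into the transition probabilities (and using nondegeneracy to split the limit of a ratio into a ratio of limits) gives
\[
q_n(g,s)=\lim_{k\to\infty}\frac{p_{k-n-1}(s^{-1}g^{-1}z_k)}{p_{k-n}(g^{-1}z_k)},
\]
so the whole problem is transported to the asymptotics of ratios of factorization counts along the trajectory $z_k$.

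The conceptual reason homogeneity should hold is now visible: the right-hand side is controlled by the behavior of $z_k$ near infinity, i.e.\ by the boundary direction determined by $\omega$, and this direction is a tail quantity, hence $\nu$-almost surely constant by ergodicity. Different visits of the chain to the same element $g$ at different times $n$ look toward the same boundary point and should therefore exhibit the same forward statistics. Making this precise is the main obstacle: passing from $n$ to $n+1$ leaves $y_k:=g^{-1}z_k$ unchanged but shifts the length budget $m=k-n$ by one, so level-independence of $q_n(g,s)$ is exactly a ratio-limit theorem, namely that along the typical sequence the successive ratios $p_{m-1}(s^{-1}y_k)/p_m(y_k)$ stabilize and are insensitive to a bounded shift of $m$. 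I expect this regularity --- the existence of the limits together with their invariance under a shift of the level --- to be the crux, with ergodicity supplying existence and non-randomness, and the Cayley-graph geometry supplying the translation covariance that frees the ratio from its dependence on the absolute level $n$.

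Finally, I would stress that nondegeneracy is used twice and is essential: it guarantees that the ratios defining $q_n$ and the Martin-type kernels above are never $0/0$, and it is exactly the hypothesis that fails on the degenerate part, which is why the present argument, and hence the link to the Laplace operator, is confined to the Laplacian absolute. Once level-independence of $q_n(g,s)$ is established, the resulting time-homogeneous kernel is the object to be matched with positive eigenfunctions of $\Delta_S$ in the subsequent correspondence theorem.
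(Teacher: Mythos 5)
Your setup is correct as far as it goes: encoding a central measure by the cylinder values $c_n(g)$, reading nondegeneracy off as positivity, defining $q_n(g,s)=c_{n+1}(gs)/c_n(g)$, and the ergodic-method representation of $c_n(g)$ as a limit of ratios of path counts along a $\nu$-typical trajectory are all legitimate. But the proposal has a genuine gap exactly where you yourself locate ``the crux'': the assertion that the limits $\lim_k p_{m-1}(s^{-1}y_k)/p_m(y_k)$ are insensitive to a bounded shift of the length budget $m=k-n$ is never proved, only expected. Ergodicity does give that each $q_n(g,s)$ is a.s.\ constant in $\omega$ --- but that is something you already knew, since it is a deterministic transition probability; tail-triviality by itself says nothing about how the count ratios react to changing $n$ while the target $y_k$ is held fixed. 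What you need is a ratio-limit statement, and such statements are delicate (and in naive formulations false) for general groups, so the plan stalls at its decisive step. There is also a secondary issue you do not engage with: $g$ need not be reachable at both levels $n$ and $n+1$ (reachability is constrained modulo the period of the chain), so ``independence of $n$'' can only be formulated along an admissible residue class, which forces the period combinatorics of the Cayley graph into any complete argument.

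The paper closes precisely this gap by a soft argument that avoids path counting altogether. Given a vertex $v$ of the dynamic graph lying over the identity, one restricts $\mu$ to the subgraph $\op{D}_v$ of vertices reachable from $v$, transports it back by the canonical translation isomorphism $\alpha\colon \op{D}_v\to\op{D}$, and observes that $\mu$ dominates the central measure $\alpha_*(\mu_v)$; extremality (ergodicity) then forces $\alpha_*(\mu_v)=\mu$. This yields invariance of the transition probabilities under time shifts by the length of any cycle in the Cayley graph, and a theorem on indecomposable Markov chains (Theorem~3.3 in~\cite{ChKL64}) shows that compositions of direct and inverse shifts by cycle lengths generate the shift by the period, which is exactly what homogeneity requires. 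If you want to rescue your approach, this translation-plus-extremality step is what must replace the missing ratio-limit theorem; ergodicity is used there not to control limits of counts but to force a dominated central measure to coincide with $\mu$.
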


\begin{proof}
Let $v$ be a vertex of the dynamic graph~$\op{D}=\op{D}(G,S)$ and
  $\op{D}_v$ be the subgraph in~$\op{D}$ formed by the vertices and edges reachable from~$v$. If a central measure~$\mu$ does not vanish on the set of paths passing through~$v$ (for example, if $\mu$ is nondegenerate), then we can consider its restriction $\mu_v$ to the path space of~$\op{D}_v$. The group structure gives rise to a \textit{translation}, a~canonical natural isomorphism~$\alpha$ between~$\op{D}_v$ and~$\op{D}$. If $v$ represents a central element (for example, the identity) of the group, then for every vertex~$x$ in~$\op{D}$, the vertex~$\alpha^{-1}(x)$ is, obviously, reachable from~$x$. It follows that the measure~$\mu$ dominates the measure~$\alpha_*(\mu_v)$. Clearly, the measures~$\mu_v$ and~$\alpha_*(\mu_v)$ are central. Therefore, if  $\mu$ is ergodic, then $\alpha_*(\mu_v)=\mu$.

Applying the above arguments to the vertices of the dynamic graph that project to the identity of the group, we see that the transition probabilities of the Markov chain corresponding to a nondegenerate ergodic central measure are invariant under the time shift by~$k$ provided that the Cayley graph of the pair~$(G,S)$ contains a~cycle of length~$k$. However, in the group case, the Markov chain corresponding to a nondegenerate central measure is  \textit{indecomposable} in the sense of the theory of Markov chains, and it is well known (see, for example, Theorem~3.3 in~\cite{ChKL64}) that in an indecomposable chain, the time shift by the period of the chain can be written as the composition of the (direct and inverse) shifts by the  lengths of some cycles, which implies the desired result.
\end{proof}

\begin{remark}
The second claim of Theorem~\ref{th:measures-functions} proved below shows that usually only a small part of central measures are homogeneous.
\end{remark}

Now we proceed to a detailed description of the relationships between central measures and the Laplace operator. In particular, we will explain the connection between this operator and the Laplacian absolute. A Markov measure on paths of the dynamic graph is determined by a collection of transition probabilities. Our aim is, given the eigenfunctions of the Laplace operator, to construct a system of transition probabilities, i.e., to define some measure. Given a positive function $f\colon G\to{\mathbb R}$, we define the transition probability~$p(g,gs)$, where $g\in G$ and $s\in S$, by the rule
\begin{equation}
\label{eq:nuf-gen}
p(g,gs):=f(gs)\bigg/\sum_{t\in S}f(gt).
\end{equation}
Denote by~$\mathcal{D}$ the mapping that sends a function~$f$ to the collection of transition probabilities on the Cayley graph given by~\eqref{eq:nuf-gen}, and thus to a Markov measure~$\nu_f$ on the path space of the dynamic graph. Recall that the classical theory establishes a correspondence between the minimal positive harmonic functions and the ergodic central measures of a certain form, or the points of the Poisson--Furstenberg boundary. The following theorem extends this correspondence.

\begin{theorem}
\label{th:measures-functions}
{\rm1.} The mapping~$\mathcal{D}$ induces a bijection between the set of classes of proportional positive \underline{minimal} eigenfunctions of the Laplacian and the Laplacian absolute, i.e., the set of all nondegenerate central \underline{ergodic} measures on the path space of the graph~$\op{D}(G,S)$.

{\rm2.} The mapping~$\mathcal{D}$ induces a bijection between the set of classes of proportional positive {\rm(}not necessarily minimal{\rm)} eigenfunctions of the Laplacian and the set of all \underline{homogeneous} nondegenerate central (not necessarily ergodic{\rm)} measures on the path space of the graph~$\op{D}(G,S)$.
\end{theorem}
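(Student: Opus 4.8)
The plan is to prove Claim~2 first, since it contains the constructive core, and then to deduce Claim~1 by combining Claim~2 with the homogeneity lemma (Lemma~\ref{cor:Markov-time}) and a minimality/order argument. I would begin by recording the telescoping identity behind~\eqref{eq:nuf-gen}. For positive $f$ the numbers $p(g,gs)$ sum to $1$ over $s\in S$, so $\nu_f$ is a well-defined Markov measure; it is nondegenerate because $f>0$ makes every $p(g,gs)>0$, and it is \emph{homogeneous} by construction, since $p(g,gs)$ does not depend on the level $n$ at which $g$ is reached. The key computation is that the $\nu_f$-weight of a finite path $e,s_1,\dots,s_1\cdots s_n=g$ is $\prod_{k=1}^n p(\cdot)$, and when $\Delta_S f=\lambda f$ each factor becomes $f(s_1\cdots s_k)/\bigl(\lambda|S|\,f(s_1\cdots s_{k-1})\bigr)$, so the product telescopes to $f(g)/\bigl((\lambda|S|)^n f(e)\bigr)$, which depends only on the endpoint $g$ and the length $n$. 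Hence $\nu_f$ is central precisely when $f$ is a positive eigenfunction, and proportional $f$ yield the same $p$, so $\mathcal D$ descends to a map from proportionality classes of positive eigenfunctions into homogeneous nondegenerate central measures.

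For surjectivity (Claim~2), take a homogeneous nondegenerate central measure $\mu$: homogeneity makes $p(g,gs)$ well defined and nondegeneracy makes it positive, while centrality says the weight $c(g,n)$ of a path to $g$ of length $n$ depends only on $(g,n)$, with $c(gs,n+1)=p(g,gs)c(g,n)$ and $\sum_{s}c(gs,n+1)=c(g,n)$. To recognize $\mu$ as some $\nu_f$ I must produce a single eigenvalue. Here I would use that, by centrality, the weight $c(e,k)$ of a closed path at the identity depends only on its length $k$ and is multiplicative under concatenation of cycles; hence $k\mapsto\log c(e,k)$ is additive on the (additively closed) set of return lengths, forcing $c(e,k)=\theta^{\,k}$ for some $\theta>0$. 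Setting $\lambda:=1/(|S|\theta)$ and $f(g):=c(g,n)(\lambda|S|)^n$, the return-length relation shows $f(g)$ is independent of the level $n$, and the two recursions above then give $\sum_{s}f(gs)=\lambda|S|f(g)$ and $p(g,gs)=f(gs)/\sum_t f(gt)$, i.e. $f$ is a positive eigenfunction with $\mathcal D(f)=\mu$; this also recovers $f$ from $\mu$ up to a scalar, completing injectivity. I expect the well-definedness of $\lambda$ — the existence of a single exponential rate governing all cycles — to be the first main obstacle, and it rests on the irreducibility (indecomposability) of the walk, exactly as in the proof of Lemma~\ref{cor:Markov-time}.

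To pass from Claim~2 to Claim~1, note that by Lemma~\ref{cor:Markov-time} every nondegenerate ergodic central measure is homogeneous, hence equals $\nu_f$ for a positive eigenfunction $f$; it remains only to match \emph{ergodic} with \emph{minimal}. The easy direction I would argue by contraposition: if $f$ is not minimal, pick a positive eigenfunction $g\le f$ with $g\not\propto f$; then $h:=f-g$ is a nonnegative eigenfunction, and it is in fact strictly positive, since a nonnegative eigenfunction with a zero vanishes identically by irreducibility. The telescoping formula of the first paragraph then yields a genuine convex splitting $\nu_f=\alpha\,\nu_g+(1-\alpha)\nu_h$ with $\alpha=g(e)/f(e)\in(0,1)$ and $\nu_g\ne\nu_h$, so $\nu_f$ is not ergodic.

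For the converse, minimal $\Rightarrow$ ergodic, I would take a nontrivial splitting $\nu_f=a\mu_1+(1-a)\mu_2$ into central measures and show each $\mu_i$ is again of the form $\nu_{g_i}$; since the unnormalized cylinder weights $g\mapsto f(g)/(\lambda|S|)^n$ depend linearly on the eigenfunction, this would exhibit $f$ as a nontrivial sum of positive eigenfunctions, contradicting minimality. The crux — and the hardest point of the whole theorem — is that the components $\mu_i$ are themselves nondegenerate and homogeneous; equivalently, that the homogeneous nondegenerate central measures of a fixed rate $\lambda$ form a \emph{face} of the central simplex. I would establish this by the same domination-and-translation mechanism as in Lemma~\ref{cor:Markov-time}: each $\mu_i\le a^{-1}\nu_f$ is absolutely continuous with respect to the nondegenerate $\nu_f$, so its support is the whole path space and it is nondegenerate, whereupon the homogeneity lemma upgrades it to homogeneous and Claim~2 finishes. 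Thus the bookkeeping in the first and third paragraphs is routine, and the two genuinely substantial points are the existence of the common rate $\lambda$ and the faceness (nondegeneracy of components), both of which reduce to the indecomposability of the walk.
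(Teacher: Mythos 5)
Your proof of Claim~2 --- the common rate $\theta$ extracted from cycle weights, the normalization $f(g):=c(g,n)(\lambda|S|)^{n}$, and the two recursions --- is essentially the paper's own argument (it reproduces Lemma~\ref{lem:homogen-2} together with the paper's proof of Claim~2), and your contrapositive argument for ``ergodic $\Rightarrow$ minimal'' is a concrete form of the paper's observation that $\nu\mapsto f_\nu$ is an affine isomorphism between the measures of a fixed characteristic and the normalized positive eigenfunctions with the corresponding eigenvalue. Both of these are correct.

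The genuine gap is in ``minimal $\Rightarrow$ ergodic,'' precisely at what you yourself call the crux, and your resolution fails on three counts. (i) The step ``$\mu_i\le a^{-1}\nu_f$, hence $\mu_i\ll\nu_f$, hence $\mu_i$ has full support and is nondegenerate'' rests on a false implication: absolute continuity with respect to a fully supported measure does not give full support (the normalized restriction of $\nu_f$ to a single cylinder is absolutely continuous with respect to $\nu_f$ and is degenerate). For \emph{central} components the conclusion is in fact true, but proving it requires Theorem~\ref{lem:degenerate-simple}, not measure-theoretic generalities. (ii) You then invoke Lemma~\ref{cor:Markov-time} to make $\mu_i$ homogeneous, but that lemma is about \emph{ergodic} nondegenerate central measures, and its proof uses ergodicity essentially (extremality is what forces $\alpha_*(\mu_v)=\mu$); your $\mu_i$ are merely central. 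Homogeneity is genuinely not automatic here: by Claim~2 itself, the average of two homogeneous measures with distinct characteristics is central and nondegenerate, yet has no characteristic and so is not homogeneous. (iii) Even granting $\mu_i=\nu_{g_i}$, minimality of $f$ constrains only eigenfunctions with the \emph{same} eigenvalue, so you must still rule out $g_1,g_2$ having different eigenvalues; your sketch never does this, although your ``faceness'' formulation tacitly requires it.

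The paper closes all three holes with one device absent from your proposal: it decomposes $\nu_f$ into \emph{ergodic} components (so that Lemma~\ref{cor:Markov-time} legitimately applies to the nondegenerate ones) and evaluates all measures on the powers $Z^n$ of a fixed cycle at the identity. Degenerate ergodic components vanish there by Theorem~\ref{lem:degenerate-simple}; a nondegenerate ergodic component with characteristic $A_\omega$ contributes $A_\omega^{n|Z|}$; and since $\nu_f(Z^n)=A_{\nu_f}^{n|Z|}$ is a single exponential, the rigidity of mixtures of exponentials (Cauchy--Schwarz at $n=1,2$ already suffices) forces the degenerate part to be null and $A_\omega=A_{\nu_f}$ almost everywhere. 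Only after this does the affine identification of Claim~2 turn the decomposition of $\nu_f$ into a decomposition of $f$ within a single eigenvalue, where minimality can be applied. If you want to keep your two-term splitting, you must run this cycle-evaluation argument on its components; it, and not absolute continuity, is what delivers nondegeneracy, homogeneity, and equality of characteristics.
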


The first claim of Theorem~\ref{th:measures-functions} is the main result of this section.

To prove Theorem~\ref{th:measures-functions}, we will need the following lemma, which introduces the notion of  \textit{characteristic}.

\begin{lemma}[on the existence of the characteristic]\label{lem:homogen-2}
Let $\nu$ be a homogeneous nondegenerate central measure on the path space of the graph~$\op{D}(G,S)$. Then there exist a unique number  $A_\nu \in {\mathbb R}$ (called the \textup{characteristic} of~$\nu$) and a unique function $f_\nu\colon G\to{\mathbb R}$ such that for every finite path~$P$ in the dynamic graph, 
\begin{equation}
\label{eq:f4}
\nu(P)=f_\nu(g)\cdot A_\nu^{|P|},
\end{equation}
where $\nu(P)$ is the measure of the cylinder set of paths that begin with~$P$,
$|P|$ is the length of $P$, and $g$ is the element of~$G$ corresponding to the terminal vertex of~$P$.
\end{lemma}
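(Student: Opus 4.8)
The plan is to exploit centrality to reduce the measure of a cylinder to a function of its endpoint and length, and then use homogeneity to factor off the length-dependence. First I would observe that, by the defining property of a central measure, the value $\nu(P)$ depends only on the terminal vertex~$(g,n)$ of~$P$, where $g\in G$ and $n=|P|$; write $\phi(g,n):=\nu(P)$ for this common value. Nondegeneracy gives $\phi(g,n)>0$ for every reachable pair, and in particular every one-step transition probability $p(g,gs)=\phi(gs,n+1)/\phi(g,n)$ is positive; by homogeneity it is independent of the level~$n$. Since $S$ generates $G$ as a semigroup, every vertex is reachable from~$v_0$ and, conversely, there is a path from any~$g$ back to~$e$. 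The goal is to produce $A_\nu>0$ and $f_\nu\colon G\to\mathbb{R}$ with $\phi(g,n)=f_\nu(g)\,A_\nu^{\,n}$.

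Next I would isolate the constant $A_\nu$ by looking at returns to the identity. Let $L=\{\ell\ge 1: e\text{ is reachable at level }\ell\}$; concatenating cycles shows $L$ is a subsemigroup of $(\mathbb{N},+)$, and it is nonempty (for $s\in S$, writing $s^{-1}$ as a product of generators yields a relation). Put $a(\ell):=\phi(e,\ell)$. If $w,w'$ are cycles at~$e$ of lengths $\ell,\ell'$, then the measure of the concatenated path $ww'$ factors as the product of the measures of its two segments; the first segment has measure $a(\ell)$ by centrality, and the second, by homogeneity (time-independence of the $p(g,gs)$), has the same measure as $w'$ traversed from time~$0$, namely $a(\ell')$. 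Centrality then gives $a(\ell+\ell')=a(\ell)a(\ell')$. A positive multiplicative function on a subsemigroup of~$\mathbb{N}$ must have the form $a(\ell)=A_\nu^{\,\ell}$ with $A_\nu:=a(\ell)^{1/\ell}>0$ independent of $\ell\in L$ (compare $a(\ell)^{\ell'}=a(\ell\ell')=a(\ell')^{\ell}$), so $\phi(e,\ell)=A_\nu^{\,\ell}$ for all $\ell\in L$, consistently with $\phi(e,0)=1$.

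I would then define $f_\nu(g):=\phi(g,n)\,A_\nu^{-n}$ and show this is independent of the level~$n$ at which $g$ is reached. Fix a path~$Q$ from~$g$ to~$e$, say of length~$m$. If $g$ is reachable at levels $n$ and $n+d$ via paths $P,P'$, then $PQ$ and $P'Q$ are cycles at~$e$ of lengths $n+m$ and $n+d+m$ lying in~$L$. Writing $\rho(Q)>0$ for the (time-independent, by homogeneity) measure of the $Q$-segment, centrality gives $\phi(g,n)\,\rho(Q)=A_\nu^{\,n+m}$ and $\phi(g,n+d)\,\rho(Q)=A_\nu^{\,n+d+m}$; dividing yields $\phi(g,n+d)/\phi(g,n)=A_\nu^{\,d}$. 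Hence $f_\nu$ is well defined and $\phi(g,n)=f_\nu(g)\,A_\nu^{\,n}$ for all reachable $(g,n)$, which is~\eqref{eq:f4}. Uniqueness follows by evaluating at any level $\ell\in L$, where $A_\nu^{\,\ell}=\phi(e,\ell)$ determines the positive number $A_\nu$, whence $f_\nu$ is determined as well.

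I expect the level-independence of $f_\nu$ --- equivalently, the assertion that the cylinder measures grow at one and the same geometric rate $A_\nu$ across all group elements and all pairs of levels --- to be the crux. This is precisely where homogeneity is indispensable: it is what lets a path-segment be transported in time without changing its measure, and the device of completing two competing paths $e\to g$ by a common tail $g\to e$ is what converts that invariance into the clean ratio $A_\nu^{\,d}$. The one delicate point in the uniqueness statement is the sign of $A_\nu$: when the Cayley graph is bipartite (e.g.\ $\mathbb{Z}$ with $S=\{\pm 1\}$, where $L$ consists of even numbers), replacing $A_\nu$ by $-A_\nu$ and $f_\nu$ by $g\mapsto(-1)^{n(g)}f_\nu(g)$, with $n(g)$ the common parity of the levels at which $g$ occurs, produces another solution; so uniqueness is to be read with $A_\nu$ normalized to be positive, which is exactly the value the construction above delivers.
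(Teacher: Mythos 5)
Your proof is correct and follows essentially the same route as the paper's: you extract $A_\nu$ from cycles at the identity via the commensurability (power) trick based on centrality plus homogeneity, and you prove well-definedness of $f_\nu$ by completing two competing paths to the identity with a common tail, which is exactly the published argument. Your closing observation about the sign of $A_\nu$ is a genuine refinement rather than a defect: in the bipartite case (e.g., $G=\mathbb{Z}$, $S=\{1,-1\}$, where each element occurs only at levels of one parity $n(g)$) the pair is indeed not unique over $\mathbb{R}$, since $\bigl(-A_\nu,\,g\mapsto(-1)^{n(g)}f_\nu(g)\bigr)$ also satisfies \eqref{eq:f4}, so the paper's uniqueness claim---whose proof inspects only paths representing the identity, all of even length in this case---tacitly requires the positivity normalization that your construction makes explicit.
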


\begin{proof}
Choose an arbitrary path~$Z$  in $\op{D}(G,S)$ of nonzero length~$|Z|$ that represents the identity of~$G$, and put $A_\nu:=\nu(Z)^{1/|Z|}$. The value $A_\nu$ does not depend on the choice of~$Z$. Indeed, let $Y$ be another path representing the identity. Denote by~$Z^{|Y|}$ the path whose projection to the Cayley graph is the 
$|Y|$-fold repetition of the projection of~$Z$. Then, since the paths~$Z^{|Y|}$ and~$Y^{|Z|}$ have the same length, the centrality implies that
\begin{equation}
\label{eq:YZ=ZY}
\nu(Z)^{|Y|}=\nu(Z^{|Y|})\stackrel{\text{centrality}}{=}\nu(Y^{|Z|})=\nu(Y)^{|Z|}.
\end{equation}

Further, for each element $g\in G$ choose an arbitrary path~$P_g$ representing this element in the dynamic graph and put
\begin{equation}
\label{eq:fnu}
f_{\nu}(g):=\nu(P_g)\cdot A_\nu^{-|P_g|}.
\end{equation}
Then $f_{\nu}$ does not depend on the choice of a path: if for some path $Q_g$ representing the same element~$g$, the values $\nu(Q_g)\cdot A_\nu^{-|Q_g|}$ and $\nu(P_g)\cdot A_\nu^{-|P_g|}$ differed, then, extending~$P_g$ and~$Q_g$ by paths with the same projections to the Cayley graph to paths representing the identity, we would obtain a contradiction with~\eqref{eq:YZ=ZY}.

To see the uniqueness of $A_\nu$, it suffices to consider the paths representing the identity of~$G$. By~\eqref{eq:f4}, this immediately implies the uniqueness of~$f_\nu$.
\end{proof}

\begin{remarks}
1. The notion of characteristic is parallel to the notion of eigenvalue for eigenfunctions of the Laplacian on the semigroup ${\mathbb N}_0:=\{0,1,2,\dots\}$; to see this, regard the dynamic graph as a subset in the direct product of the Cayley graph of the group and the Cayley graph of the semigroup ${\mathbb N}_0:=\{0,1,2,\dots\}$ and use the canonical bijection between the Laplacian absolute and the set of harmonic functions on the dynamic graph: harmonic functions corresponding to homogeneous measures can be decomposed into the product of eigenfunctions of the Laplacian on the original group and on the semigroup ${\mathbb N}_0$. (The problem of decomposing minimal harmonic functions into such products is discussed in~\cite{Mol67}.)

2. It is clear from the proof of Lemma~\ref{lem:homogen-2} that (in terms of this lemma)
 $\log A_\nu$ is given by the formula
\begin{equation}
\label{eq:char}
\frac{\log\nu(P_1)-\log \nu(P_2)}{|P_1|-|P_2|},
\end{equation}
where~$P_1$ and~$P_2$ are arbitrary paths of different lengths in the Cayley graph for which both initial and terminal vertices coincide,  $|P_i|$ is the length of $P_i$, and $\nu(P_i)$ is the product of the transition probabilities corresponding to the measure~$\nu$ over all edges of~$P_i$ {\rm(}counting multiplicities{\rm)}.
\end{remarks}

\begin{proof}[Proof of Claim~2 of Theorem~\ref{th:measures-functions}]
If $f$ is an eigenfunction of the Laplacian with eigenvalue~$\alpha$, then the defining formula~\eqref{eq:nuf-gen} implies that the value of the measure  $\nu_f=\mathcal{D}(f)$ on the cylinder set of paths that begin with a given path~$P$ of length~$k$ representing an element~$g$ is given by the formula
\begin{equation}
\label{def:check-eigen}
\nu_f(P)=\frac{f(g)}{f(1_G)}\cdot(\alpha\cdot |S|)^{-k}.
\end{equation}
This proves the centrality of~$\nu_f$, and its homogeneity follows from the definition.

Conversely, let $\nu$ be a homogeneous nondegenerate central measure on the path space of the graph~$\op{D}(G,S)$. Then the corresponding function~$f_\nu$ from Lemma~\ref{lem:homogen-2} is an eigenfunction of the Laplacian with eigenvalue $A_\nu^{-1}\cdot |S|^{-1}$ (this follows from~\eqref{eq:fnu}). Comparing formulas~\eqref{eq:fnu} and~\eqref{def:check-eigen}, we see that the composition of the correspondences $f\mapsto \nu_f$ and $\nu\mapsto f_\nu$ are identity mappings on the sets under considerations.
\end{proof}

\begin{proof}[Proof of Claim~1 of Theorem~\ref{th:measures-functions}]
Let us show that the bijection from the second claim of the theorem proved above sends ergodic measures to minimal functions. (This will immediately imply the desired result by the homogeneity lemma~\ref{cor:Markov-time}.) One can see from formula~\eqref{eq:fnu} defining the function~$f_\nu$ that the correspondence $\nu\mapsto f_\nu$ sends the simplex of measures with characteristic~$A$ to the simplex of positive eigenfunctions (taking the value~$1$ at the identity of the group) of the Laplacian with eigenvalue~$A^{-1}\cdot |S|^{-1}$ preserving the affine structure. Since the minimality of an eigenfunction is defined with respect to functions having the same eigenvalue, it follows that the correspondence $\nu\mapsto f_\nu$ yields an embedding of the main part of the absolute into the set of classes of proportional minimal positive eigenfunctions of the Laplacian. To check that this embedding is bijective, one should verify that the decomposition of a homogeneous nondegenerate central measure~$\nu$ into ergodic components involves neither degenerate measures, no measures with characteristics different from that of~$\nu$. To see this, in the Cayley graph choose a cycle of nonzero length with endpoints at the identity of the group and observe that, by~\eqref{eq:char}, the values of homogeneous measures with different characteristics  on the sequence of powers of this cycle produce exponentials with different bases, while every degenerate measure vanishes on this sequence (see Theorem~\ref{lem:degenerate-simple}).
\end{proof}

\begin{remarks}
1 (on the algebra of eigenfunctions). Apparently, the Laplacian absolute can be described and studied by analogy with the theory of harmonic functions, in which one introduces a nontrivial commutative multiplication of such functions, whereupon, on the one hand,  the Gelfand spectrum of the corresponding Banach algebra coincides with the set of minimal positive harmonic functions and, on the other hand, this is exactly the exit boundary of the simple walk. Such a construction of the Laplacian absolute would, first, clarify the situation and, second, simplify the proofs given above. (See~\cite{KV83}, \cite{Fo89} and the literature on harmonic analysis.) It is also of interest to establish direct analytic links between the asymptotics of typical trajectories of ergodic random walks on a group and the corresponding positive minimal eigenfunctions of the Laplace operator.

2 (on minimal eigenfunctions). The results of this section hold in a more general context: in particular, the main part of the absolute can be described in terms of minimal eigenfunctions of the Laplacian in the case of a wide class of graphs that do not have symmetries of Cayley graphs of groups (see~\cite{Mol67}). Besides, note that the collection of minimal eigenfunctions itself was studied in the literature and is well known for some cases (see~\cite{Wo00}, as well as a description of minimal eigenfunctions for the product of graphs in~\cite{Mol67}).

3 (on the structure of the Laplacian absolute). In the investigated cases, there is a bijection between the spaces of positive minimal eigenfunctions on a group for any two nonextreme eigenvalues. Accordingly, in these cases the Laplacian absolute (with the ``extreme'' fiber excluded) is the product of some space and an interval. The authors do not know examples of groups for which this property does not hold. In any case, the class of groups for which it does hold is of interest.

4 (on the group action on the Laplacian absolute). Identifying the Laplacian absolute with the set of minimal eigenfunctions of the Laplacian (Theorem~\ref{th:measures-functions}), we obtain an action of the group on its Laplacian absolute. It is of interest to describe the class of groups for which this action is trivial. This is exactly the class of groups for which every nondegenerate ergodic central measure gives rise to a Markov chain with independent identically distributed increments (the transition probabilities are the same at all vertices of the graph and depend only on the generators labeling the edges). This class contains all finitely generated nilpotent groups (see Section~6.3 and Corollary~\ref{cor:nilpotent} below).
\end{remarks}

\section{Preservation of the Laplacian absolute under extensions}

In this section we introduce the notion of a totally distorted subgroup and prove that taking the quotient of a group by a totally distorted subgroup whose all elements have finite conjugacy classes in the original group changes neither the collection of eigenfunctions of the Laplacian (Lemma~\ref{lem:eigenfunctions-invariant}), nor the Laplacian part of the absolute (Theorem~\ref{th:ext-FC}). These results are a natural generalization of Margulis' theorem, which arises when looking on the latter from the viewpoint of geometric group theory. A crucial role here is played by the notion of a distorted subgroup from this theory. As we will see, in the case of nilpotent groups the existence of a~central totally distorted subgroup is a typical situation. This gives Corollary~\ref{cor:nilpotent} saying that the Laplacian absolute of a nilpotent group coincides with that of its abelianization.

\begin{definition}
A finitely generated subgroup~$H$ of a finitely generated group~$G$ is called \textit{distorted} if the identity embedding $H\to G$ is not a quasi-isometry with respect to word metrics; the property of being distorted does not depend on the choice of a system of generators (see~\cite{Gr93}). A subgroup~$K$ in a group~$G$ will be called \textit{totally distorted} if all infinite cyclic subgroups of~$K$ are distorted in~$G$. \end{definition}

In the case of an infinite cyclic subgroup, we have the following equivalent definition of distortion: the infinite cyclic subgroup  $\langle g \rangle$ generated by an element $g\in G$ is distorted if and only if $|g^k|_G=o(k)$, where $|\bcdot|_G$ is the length of an element in the word metric of the group~$G$. Thus, a subgroup~$K$ in $G$ is totally distorted if and only if $|g^k|_G=o(k)$ for every element $g\in K$.

\begin{theorem}\label{th:ext-FC}
Let $\phi\colon G\to G/K$ be an epimorphism of finitely generated groups such that the kernel~$K$ is a totally distorted subgroup and every element of~$K$ has a finite conjugacy class in~$G$. Let $S$ be a collection of generators of~$G$, and assume that  $\phi$ is injective on~$S$. There arises a natural isomorphism~$\phi_*$ between the path spaces of the graphs~$\op{D}(G,S)$ and~$D(G/K,\phi(S))$. Then $\phi_*$ induces an isomorphism of the Laplacian absolutes of the pairs~$(G,S)$ and~$(G/K,\phi(S))$.
\end{theorem}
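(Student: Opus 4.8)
The plan is to reduce the statement to a correspondence between positive eigenfunctions of the two Laplacians and then to check that this correspondence is realized geometrically by $\phi_*$. First I would pin down $\phi_*$ itself. A path in $\op{D}(G,S)$ is an infinite word in the alphabet~$S$, since from each vertex~$g$ the outgoing edges are labelled bijectively by the generators (the edge labelled~$s$ leads to~$gs$); likewise a path in $\op{D}(G/K,\phi(S))$ is an infinite word in~$\phi(S)$. As $\phi$ is injective on~$S$, the induced map $S\to\phi(S)$ is a bijection of alphabets, so letter-by-letter application gives the homeomorphism~$\phi_*$ of path spaces; in particular $|\phi(S)|=|S|$, which I use repeatedly. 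I would also record that $\phi_*$ carries the cylinder of a finite $S$-word $P$ ending at $g\in G$ to the cylinder of the corresponding $\phi(S)$-word ending at $\phi(g)\in G/K$, and that this is compatible with the vertex identifications because $\phi$ is a homomorphism.

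Next I would invoke Theorem~\ref{th:measures-functions} for both pairs: the Laplacian absolute is in bijection with the classes of proportional minimal positive eigenfunctions of the corresponding Laplacian. Thus it suffices to exhibit a bijection of the two eigenfunction families and to show it is implemented by $(\phi_*)_*$. For the eigenfunction correspondence I use pullback $\phi^*\colon \bar f\mapsto \bar f\circ\phi$. Since $\phi$ is a homomorphism and $|\phi(S)|=|S|$, a direct computation shows $\phi^*$ sends a positive eigenfunction of $\Delta_{\phi(S)}$ with eigenvalue~$\alpha$ to a positive eigenfunction of $\Delta_S$ with the same eigenvalue; it is injective and, being affine and order-preserving on each fixed-eigenvalue cone, it respects proportionality. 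Surjectivity onto all positive eigenfunctions of $\Delta_S$ — equivalently, the assertion that every positive eigenfunction of $\Delta_S$ is constant on the cosets of~$K$ — is exactly Lemma~\ref{lem:eigenfunctions-invariant}. Granting it, minimality is preserved in both directions: if $0<g\le\phi^*\bar f$ is an eigenfunction on~$G$, the lemma writes $g=\phi^*\bar g$ with $\bar g\le\bar f$ of the same eigenvalue, so minimality of~$\bar f$ forces $\bar g\propto\bar f$, hence $g\propto\phi^*\bar f$. Thus $\phi^*$ restricts to a bijection of classes of proportional minimal positive eigenfunctions.

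I would then check the naturality square, namely that the resulting bijection of absolutes coincides with $(\phi_*)_*$. For a minimal positive eigenfunction $\bar f$ on $G/K$ with eigenvalue~$\alpha$ and pullback $f=\phi^*\bar f$, formula~\eqref{def:check-eigen} gives $\nu_f(P)=\frac{f(g)}{f(1_G)}(\alpha|S|)^{-|P|}$ where $g$ is the endpoint of~$P$, while $\nu_{\bar f}(\phi_*(P))=\frac{\bar f(\phi(g))}{\bar f(1)}(\alpha|\phi(S)|)^{-|P|}$. Since $f=\bar f\circ\phi$, $f(1_G)=\bar f(1)$, and $|\phi(S)|=|S|$, these agree, so $(\phi_*)_*\nu_f=\nu_{\bar f}$; consistency over different words representing the same group element is automatic from centrality. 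Together with the eigenfunction bijection and Theorem~\ref{th:measures-functions}, this yields the asserted isomorphism of Laplacian absolutes.

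The main obstacle is Lemma~\ref{lem:eigenfunctions-invariant}, the coset-invariance of positive eigenfunctions, and I expect to prove it as follows. Positivity and the eigenvalue equation give $f(gs)\le \alpha|S|\,f(g)$, whence a two-sided exponential bound $(\alpha|S|)^{-|h^{-1}|_S}\le f(gh)/f(g)\le(\alpha|S|)^{|h|_S}$ in terms of the semigroup word length. Taking $h=k^n$ for $k\in K$ and using total distortion, $|k^{\pm n}|_S=o(n)$, yields $\frac1n\log\!\big(f(gk^n)/f(g)\big)\to0$, so $f$ cannot grow or decay exponentially along any cyclic subgroup of~$K$. The delicate point is to upgrade this ``vanishing of the orbital average of $\log\big(f(gk)/f(g)\big)$ along $\langle k\rangle$'' to the pointwise equality $f(gk)=f(g)$; this is where the finite-conjugacy-class hypothesis enters, via the relations $sk=(sks^{-1})s$ with $sks^{-1}$ ranging over the finite class of~$k$, which let one propagate the orbital estimate across the whole group and exclude any nonconstant multiplicative ratio. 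I would regard this propagation step as the crux of the entire argument.
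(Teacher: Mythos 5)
Your top-level reduction is exactly the paper's: pass to positive eigenfunctions via Theorem~\ref{th:measures-functions}, note that the pullback $\phi^*$ is injective and that its surjectivity is precisely the coset-invariance statement of Lemma~\ref{lem:eigenfunctions-invariant}, and check that minimality and the identification of measures are respected. That part is sound. But you have not proved the lemma, and you say so yourself (``I would regard this propagation step as the crux of the entire argument''). This is a genuine gap, because the step you are missing cannot be closed by the mechanism you sketch. Your estimate gives only that $\frac1n\log\bigl(f(gk^n)/f(g)\bigr)\to 0$; for a general positive eigenfunction the ratios $f(gk^{n+1})/f(gk^n)$ need not be constant or multiplicative in $n$, so subexponential growth along the orbit excludes nothing pointwise---the ratios could a priori oscillate. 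Likewise, the finite-conjugacy hypothesis by itself only yields a two-sided \emph{bound} $\varepsilon\le f(kx)/f(x)\le \varepsilon^{-1}$ uniform in $x$ (via Harnack, since $d(x,kx)=|x^{-1}kx|$ takes finitely many values), and a uniform bound cannot be upgraded to equality: on $G=\mathbb{Z}$ with $k=1$ (finite conjugacy class, bounded displacement, but undistorted) the eigenfunctions $f(n)=e^{cn}$ have bounded, nonconstant ratio. So bounded ratios plus subexponential orbital growth, which is all your sketch provides, is not yet a proof.

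The missing ideas, which are how the paper actually proves Lemma~\ref{lem:eigenfunctions-invariant}, are these. First, reduce to \emph{minimal} positive eigenfunctions, using the decomposition of an arbitrary positive eigenfunction into minimal ones. Second, for minimal $f$ and $k\in K$, the left translate $x\mapsto f(kx)$ is again an eigenfunction with the same eigenvalue (left translations commute with $\Delta_S$), and the uniform Harnack bound coming from the finite conjugacy class gives the domination $f\ge\varepsilon\, f(k\,\cdot\,)$ on all of $G$; \emph{minimality} then upgrades domination to exact proportionality, $f(kx)=t\,f(x)$ for a single constant $t>0$. This is the step that manufactures the global multiplicative ratio whose existence you were implicitly assuming. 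Third, only now does distortion enter: $f(k^n)=t^n f(1_G)$ together with your own estimate $f(k^{\pm n})/f(1_G)\le C^{|k^{\pm n}|_S}=C^{o(n)}$ forces $t=1$. Note the order of quantifiers: finite conjugacy $+$ minimality produce the character $t$; total distortion kills it; neither hypothesis suffices without the other, as the $\mathbb{Z}$ example (no distortion) and your own oscillation problem (no minimality) show.
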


The main fact used in the proof of Theorem~\ref{th:ext-FC} is that $\phi_*$ sends central measures to central measures. If $K$ is not totally distorted, then the latter may not be true. To prove Theorem~\ref{th:ext-FC}, we will need the following lemma related to Margulis' theorem on harmonic functions on nilpotent groups~\cite{Mar66} (see also~\cite{Wo00}). This lemma can be directly extended to Laplacians of arbitrary measures whose supports generate the whole group; however, here we restrict ourselves to  the main case we are interested in.

\begin{lemma}\label{lem:eigenfunctions-invariant}
Let $G$ be a finitely generated group. Assume that a subgroup~$H$ in~$G$ is totally distorted and every element of~$K$ has a finite conjugacy class in~$G$. Let $S$ be a collection of generators in~$G$. Then every positive eigenfunction of the Laplacian constructed from~$S$ is constant on the right cosets of~$H$.
\end{lemma}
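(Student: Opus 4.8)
The plan is to prove the statement first for \emph{minimal} positive eigenfunctions and then to deduce the general case by an ergodic (Choquet) decomposition. Throughout, let $f>0$ satisfy $\Delta_Sf=\alpha f$, so that $\sum_{s\in S}f(gs)=Cf(g)$ with $C:=\alpha|S|$, and fix $h\in H$. The conclusion ``$f$ is constant on the right coset $Hg$'' is precisely the assertion $f(hg)=f(g)$ for all $g$. Two elementary observations set the stage. First, since left and right multiplication commute, the operator $\Delta_S$ (a right convolution) commutes with left translations; hence $L_hf\colon g\mapsto f(hg)$ is again a positive eigenfunction with the same eigenvalue $\alpha$, and our goal is exactly $L_hf=f$. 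Second, positivity turns the eigenvalue equation into a Harnack inequality: $f(gs)\le Cf(g)$ for every generator $s$, and iterating along geodesic words (together with the same bound applied to $a^{-1}$) yields
\[
C^{-|a^{-1}|_G}\,f(g)\le f(ga)\le C^{|a|_G}\,f(g)\qquad(a\in G,\ g\in G).
\]

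Now suppose $f$ is minimal. The role of the \emph{finite-conjugacy-class} hypothesis is to make the ratio $\rho_h(g):=f(hg)/f(g)$ \emph{uniformly} bounded. Writing $hg=g\,(g^{-1}hg)$ and using the upper Harnack bound gives $\rho_h(g)\le C^{|g^{-1}hg|_G}\le C^{M}$, where $M:=\max\{|c|_G:\ c\ \text{is conjugate to}\ h\}<\infty$ precisely because the conjugacy class of $h$ is finite. Thus $L_hf\le C^{M}f$, so $C^{-M}L_hf$ is a nonnegative eigenfunction with eigenvalue $\alpha$ \emph{dominated by} $f$; by minimality it is proportional to $f$, i.e.\ $L_hf=c_h\,f$ for some constant $c_h>0$, equivalently $\rho_h\equiv c_h$. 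Iterating gives $f(h^kg)=c_h^{\,k}f(g)$ for all $k$. At this last step the \emph{distortion} hypothesis enters: if $h$ has infinite order then $\langle h\rangle$ is distorted, so $|h^{\pm k}|_G=o(k)$, and evaluating the two-sided Harnack inequality at $g=1_G$ gives
\[
c_h^{\,k}=\frac{f(h^{k})}{f(1_G)}\in\bigl[C^{-o(k)},\,C^{o(k)}\bigr],
\]
whence $k\,|\log c_h|=o(k)$ and therefore $c_h=1$; if instead $h$ has finite order $n$, then $c_h^{\,n}=f(h^n)/f(1_G)=1$ forces $c_h=1$ as well. Hence $L_hf=f$ for every $h\in H$, which is the assertion for minimal $f$.

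Finally, to reach an arbitrary positive eigenfunction $f$, I would invoke the simplex structure behind Theorem~\ref{th:measures-functions}: the minimal positive $\alpha$-eigenfunctions are the extreme rays of the cone of positive $\alpha$-eigenfunctions, and the ergodic decomposition of the associated central measure (whose components have the same characteristic and are nondegenerate, as in the proof of Theorem~\ref{th:measures-functions}) expresses $f$ as an integral $f=\int f_\omega\,dm(\omega)$ of minimal eigenfunctions normalized at $1_G$. Each $f_\omega$ is left-$H$-invariant by the previous paragraph, and left-$H$-invariance is preserved under integration, so $f(hg)=\int f_\omega(hg)\,dm=\int f_\omega(g)\,dm=f(g)$. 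I expect the crux to be the uniform boundedness of $\rho_h$ in the second paragraph: this is exactly where finite conjugacy classes are indispensable, since they convert left translation by $h$ (which does not interact well with the right-convolution Laplacian) into right translation by the boundedly many, boundedly long conjugates $g^{-1}hg$, where the Harnack inequality applies directly. Without this hypothesis the ratio $f(hg)/f(g)$ need not be bounded, and neither minimality nor distortion could be brought to bear; the distortion hypothesis, by contrast, is needed only at the very end, to exclude a nontrivial exponential factor $c_h\neq 1$.
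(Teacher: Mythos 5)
Your proof is correct and takes essentially the same route as the paper's: a Harnack-type estimate combined with the finite conjugacy class to show that the left translate $L_hf$ is dominated by a constant multiple of $f$, minimality to conclude $f(hx)=c_h\,f(x)$, distortion (or finite order) of $\langle h\rangle$ to force $c_h=1$, and finally a decomposition of an arbitrary positive eigenfunction into minimal ones. The differences are only in presentation: you derive the Harnack inequality directly from positivity and make the bound $C^{M}$ explicit, whereas the paper cites Woess and phrases the uniform bound via the finitely-valued function $x\mapsto d(x,gx)$, and you spell out the integral (Choquet) form of the final decomposition that the paper states in one line.
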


\begin{proof}
Consider a left-invariant word metric~$d$ on~$G$ (for example, the one corresponding to the collection of generators~$S$), and associate with an element $g$ of~$G$ the function
$$
\phi_g\colon G\to{\mathbb R},\qquad x\mapsto d(x,gx)=d(1_G, x^{-1}gx),
$$
sending an element $x\in G$ to the distance $d(x,gx)$ (in the chosen metric) between~$x$ and~$gx$. If the conjugacy class of~$g$ in~$G$ is finite, then, obviously, the function~$\phi_g$ takes only finitely many values (since the element~$x^{-1}gx$ is conjugate to~$g$). Then we deduce, using Harnack's inequality (see, e.g.,~\cite[p.~262]{Wo00}), that for every positive eigenfunction~$f$ of the Laplacian there is a positive constant~$\varepsilon$ such that ${f(x)>\varepsilon\cdot f(gx)}$ for all~$x\in G$. If $f$ is minimal and thus cannot dominate eigenfunctions not proportional to~$f$, then it follows that for some positive $t\in{\mathbb R}$ we have $f(gx)=t\cdot f(x)$ for all $x\in G$. But $f(g^n)=t^n\cdot f(1_G)$, hence if the subgroup~$\langle g\rangle$ generated by~$g$ is finite or distorted, then, again using Harnack's inequality and applying it this time to elements of the sequence~$f(g^n)$, we see that $t=1$. It remains to observe that every eigenfunction of the Laplacian can be decomposed into a sum of minimal eigenfunctions.
\end{proof}

\begin{proof}[Proof of Theorem~\ref{th:ext-FC}]
Observe that for an arbitrary group epimorphism $G_1\to G_2$ preserving the collection of generators, every eigenfunction of the Laplacian on~$G_2$ can be lifted to an eigenfunction of the Laplacian on~$G_1$, but in the general case the Laplacian on~$G_1$ can have eigenfunctions that do not correspond to any eigenfunctions of the Laplacian on~$G_2$. If the kernel of the epimorphism satisfies the conditions of Lemma~\ref{lem:eigenfunctions-invariant}, then the lemma guarantees that every positive eigenfunction of the Laplacian on~$G_2$ is the lifting of an eigenfunction of the Laplacian on~$G_1$. Thus, an epimorphism with such a kernel induces a bijection at the level of positive eigenfunctions of the Laplacians. The proof is completed by passing from eigenfunctions of the Laplacians to the Laplacian absolutes (Theorem~\ref{th:measures-functions}).
\end{proof}

\begin{corollary}
\label{cor:nilpotent}
Let $N$ be a finitely generated nilpotent group and ${\phi\colon N\to \operatorname{Ab}(N)}$ be the abelianization homomorphism. Let $S$ be a collection of generators in~$N$, and assume that $\phi$ is injective on~$S$. There arises a natural isomorphism~$\phi_*$ between the path spaces of the graphs $\op{D}(N,S)$ and $\op{D}(\operatorname{Ab}(N),\phi(S))$. Then $\phi_*$ induces an isomorphism of the Laplacian absolutes of the pairs~$(N,S)$ and $(\operatorname{Ab}(N),\phi(S))$.
\end{corollary}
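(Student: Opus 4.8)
The plan is to deduce the corollary from Theorem~\ref{th:ext-FC} by induction on the nilpotency class $c$ of $N$, peeling off one central factor at a time. It is tempting to apply the theorem in a single step with kernel $K=[N,N]$, since the statement of the corollary is formally the case $G/K=\operatorname{Ab}(N)$ of the theorem. This does not work directly: although $[N,N]$ is totally distorted, its elements need not have finite conjugacy classes once $c\ge 3$ (already in a free nilpotent group of class $3$ the conjugacy class of a basic commutator is infinite, being a coset of $\gamma_3$). The remedy is to quotient only by \emph{central} subgroups, for which the finite-conjugacy-class hypothesis of Theorem~\ref{th:ext-FC} is automatic, and to reach $\operatorname{Ab}(N)$ in several steps.

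If $c\le 1$ then $N$ is abelian, $\operatorname{Ab}(N)=N$, and there is nothing to prove; this is the base of the induction. For the inductive step, let $Z=\gamma_c(N)$ be the last nontrivial term of the lower central series of $N$, and consider the epimorphism $\psi\colon N\to N/Z$. I would check the hypotheses of Theorem~\ref{th:ext-FC} for $\psi$ as follows. First, $Z$ is central in $N$, since $[Z,N]=\gamma_{c+1}(N)=1$; hence every element of $Z$ has a one-element (in particular finite) conjugacy class. Second, $Z\subseteq\gamma_2(N)=[N,N]$, and elements of the commutator subgroup of a finitely generated nilpotent group are distorted, so $Z$ is totally distorted. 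Finite generation of $Z$ and of $N/Z$ is automatic, as finitely generated nilpotent groups are Noetherian. Finally, since $S$ injects into $\operatorname{Ab}(N)$ by hypothesis and this injection factors through $N/Z$, the map $S\to N/Z$ is injective as well. Granting these points, Theorem~\ref{th:ext-FC} yields an isomorphism of the Laplacian absolutes of $(N,S)$ and $(N/Z,\bar S)$, where $\bar S=\psi(S)$.

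The step I expect to require the most care is the total distortion of $Z$, i.e.\ the estimate $|g^k|_N=o(k)$ for every $g\in Z$. This is the standard nilpotent distortion phenomenon: for $g\in\gamma_i(N)$ one has $|g^k|_N=O(k^{1/i})$, which for $i\ge 2$ is $o(k)$. The mechanism is that in a nilpotent group $[a^m,b^m]$ coincides with $[a,b]^{m^2}$ modulo $\gamma_3$, so that a power $g^{m^2}$ of a commutator can be realized by a word of length $O(m)=O(\sqrt{k})$; isolating such powers while controlling the higher central corrections, and inducting downward along the lower central series, gives the general bound. I would simply invoke this as a known geometric fact rather than reprove it.

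To close the induction, note that $N/Z$ is nilpotent of class $c-1$ and has the same abelianization as $N$: from $Z\subseteq[N,N]$ we get $[N/Z,N/Z]=[N,N]/Z$, whence $\operatorname{Ab}(N/Z)=\operatorname{Ab}(N)$, and $\bar S$ still maps injectively to $\operatorname{Ab}(N)$ because $S$ does. By the induction hypothesis the Laplacian absolute of $(N/Z,\bar S)$ is isomorphic to that of $(\operatorname{Ab}(N),\phi(S))$; composing this with the isomorphism from the previous step gives the assertion. Equivalently, one may phrase the whole argument as a finite chain of applications of Theorem~\ref{th:ext-FC} along the lower central series, each successive quotient being by a subgroup that is central (hence FC) and contained in the commutator subgroup (hence totally distorted), terminating at $\operatorname{Ab}(N)$.
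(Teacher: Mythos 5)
Your proposal is correct and follows essentially the same route as the paper's proof: both peel off the last term of the lower central series (which is central, so the finite-conjugacy-class hypothesis is automatic) and iterate Theorem~\ref{th:ext-FC} down to the abelianization, noting that the abelianization is unchanged at each step. The only difference is in how total distortion of $\gamma_c(N)$ is verified: the paper does this self-containedly, observing that $\gamma_c(N)$ is generated by commutators lying in the center and that for a central commutator the identity $[a^m,b^n]=[a,b]^{mn}$ holds exactly (so powers of products of these commuting generators have length $O(\sqrt{k})$), whereas you invoke the general estimate $|g^k|_N=O(k^{1/i})$ for $g\in\gamma_i(N)$ as a known fact.
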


\begin{proof}
In an arbitrary group, the subgroup generated by the commutators lying in the center of the group is totally distorted; this can easily be deduced from the fact that if a commutator~$[a,b]$ lies in the center, then $[a^m,b^n]=[a,b]^{mn}$ for arbitrary integers~$m$ and~$n$.

In a nilpotent group~$N$ of class~$s$, the subgroup~$N_{s-1}$ (where $N_i=[N,N_{i-1}]$,   $N_0:=N$) is generated by the commutators lying in the center of~$N$ and, consequently, totally distorted, the quotient~$N/N_{s-1}$ being a nilpotent group of class~$s-1$, with $\operatorname{Ab}(N)=\operatorname{Ab}(N/N_{s-1})$. Therefore,  $\operatorname{Ab}(N)$ is obtained from~$N$ by taking a~sequence of quotients by central totally distorted subgroups, and the claim follows from Theorem~\ref{th:ext-FC}.
\end{proof}

\begin{remark}
Theorem~\ref{th:ext-FC} and Lemma~\ref{lem:eigenfunctions-invariant} generalize, in the sense of significantly extending the class of groups for which the described phenomenon holds, the important Margulis' theorem~\cite{Mar66} saying that every positive eigenfunction of the Laplacian of a nilpotent group is constant on the cosets of the commutant. See also~\cite{Wo00}.
\end{remark}

\section{The degenerate part of the absolute and geodesics on the group}

In this section we prove a number of results concerning the degenerate part of the absolute. As in the previous sections, let $G$ be a finitely generated group with a finite collection of generators~$S$. We construct the dynamic graph~$\op{D}(G,S)$ corresponding to the pair~$(G,S)$ and consider central measures on the space of infinite paths of this graph.

\begin{theorem}\label{lem:degenerate-simple}
All degenerate central measures on the path space of the dynamic graph are concentrated on paths whose projections to the Cayley graph do not contain cycles.
\end{theorem}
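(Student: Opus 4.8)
The plan is to argue by contradiction, working in the equivalent ``cylinder'' form of the statement. Since the set of infinite paths whose projection contains a cycle is the countable union, over all finite paths $P$ whose projection already contains a cycle, of the cylinders $\mathrm{cyl}(P)$, it suffices to show that $\nu(\mathrm{cyl}(P))=0$ for every such $P$. As $\nu$ is central, $\nu(\mathrm{cyl}(P))$ depends only on the terminal vertex of $P$, so the claim becomes: if a vertex $v=(g,n)$ of $\op{D}(G,S)$ is the endpoint of some cycle-containing path from the root, then $c(v):=\nu(\mathrm{cyl}(P))=0$. I would first reduce to $\nu$ ergodic: if $\nu=\int\nu_\omega\,dP(\omega)$ is the ergodic (extreme-point) decomposition of a degenerate central measure, then $\nu_\omega$ is degenerate for $P$-almost every $\omega$, and the concentration property is inherited by the mixture; so I may assume that $\nu$ is an ergodic central measure with a \emph{proper} support ideal.

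Two structural facts drive the argument. First, a cycle in the projection is exactly a nontrivial \emph{relator}, i.e.\ a nonempty word $u=s_{i+1}\cdots s_j$ with $u=_G 1_G$; by left translation $u$ is a closed walk based at \emph{every} element of $G$, so it can be inserted at any vertex of $\op{D}(G,S)$ to raise its level by $|u|$ while fixing the group element. This is the only available device for adjusting path lengths. Second, I use centrality in the form recorded in Section~2.4: the vertices with $c(v)>0$ form an \emph{ideal} $I$ (closed under passing to ancestors along \emph{any} path), while degeneracy means precisely that $I$ is proper, so there is a ``forbidden'' vertex $v_*=(g_*,n_*)$ whose entire up-cone of descendants is $\nu$-null.

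Assuming, for contradiction, that a loop occurs with positive measure, I obtain a vertex $(h,j)$ with $c((h,j))>0$ reached by a cycle-containing path. The target is then to exhibit a forbidden vertex that is an \emph{ancestor} of a positive one: if $v_*$ lies on some root-path terminating at a positive $(h,j')$, then by centrality that whole path has positive measure, whence $c(v_*)>0$, contradicting the choice of $v_*$. Concretely, fix a word $q$ from $g_*$ to $h$ in the Cayley graph; then $v_*$ is an ancestor of $(h,j')$ as soon as $j'\ge n_*+|q|$ and the surplus $j'-n_*-|q|$ is the length of some relator, which one arranges by padding $q$ with copies of $u$. What remains is to produce a \emph{positive} vertex representing $h$ at an arbitrarily large level $j'$ in the correct residue class modulo $|u|$; this is where ergodicity enters, through a recurrence/zero--one argument showing that, once a loop has positive probability, the generic trajectory returns to ``loop behaviour'' infinitely often, yielding positive vertices $(h,j')$ with $j'\to\infty$.

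The main obstacle is exactly this last point, and it is genuine: positivity cannot be propagated \emph{downward} by harmonicity of $c$ alone, since $c(v)=\sum_{s\in S}c(vs)$ forces only \emph{some} child to be positive, never a prescribed one. The case $G=\mathbb Z$ already shows this --- local reasoning never yields $c(k,n)>0$ from $c(0,2)>0$, and one must invoke the extreme-point structure. Thus the heart of the proof is to turn ``a loop has positive probability'' into ``loops recur along the generic trajectory,'' an ergodic-theoretic recurrence statement for the tail-trivial measure $\nu$, combined with the elementary but fiddly matching of levels modulo $|u|$ via inserted relators. An alternative packaging of the same difficulty is the Vershik--Kerov ergodic method, $c(w)=\lim_n \dim(w,v_n)/\dim(v_0,v_n)$ along a generic path $(v_n)$: there one would bound $\dim(v_*,v_n)$ below by a fixed fraction of $\dim(v_0,v_n)$ using loop insertions, contradicting $c(v_*)=0$, and producing that lower bound is the crux.
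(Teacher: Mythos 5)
Your setup (reduction to ergodic measures, the vertex function $c$, the ideal formulation of degeneracy, moving the relator to the front, padding with copies of $u$) matches the paper's strategy in outline, but you have correctly diagnosed---and then left open---exactly the step that carries the paper's proof: converting ``one cycle has positive measure'' into ``arbitrarily long paths projecting to cycles have positive measure.'' The paper closes this gap not by a recurrence or zero--one argument but by the translation argument already proved in the homogeneity lemma (Lemma~\ref{cor:Markov-time}), which a footnote extends to degenerate measures supported on an ideal. Concretely: by centrality one rearranges the positive cycle-containing path $P$ into a path $P'$ with the same endpoint that begins with a lift $P''$ of a cycle at the identity, so $\nu(P'')\ge\nu(P')=\nu(P)>0$; if $v$ is the terminal vertex of $P''$ (which lies over the identity, a central element), the canonical translation isomorphism $\alpha\colon\op{D}_v\to\op{D}$ pushes the conditional measure $\nu_v$ forward to a central measure dominated by $\nu$, and extremality of the ergodic $\nu$ forces $\alpha_*(\nu_v)=\nu$. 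Hence, conditioned on traversing $P''$, the future is a translated copy of $\nu$ itself, and induction gives $\nu\bigl((P'')^k\bigr)>0$ for every $k$. This self-similarity of ergodic central measures under translation by central group elements is precisely the ``downward propagation'' you said harmonicity alone cannot provide---and indeed it cannot; note also that tail-triviality by itself only tells you that the tail event ``the projection contains infinitely many cycles'' has probability $0$ or $1$, while nothing in your sketch shows it is positive, so the zero--one route is not merely unfinished but circular as stated.

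There is a secondary gap in your level-matching. Even granted positive vertices at the levels you need, your padding of $q$ with copies of $u$ only reaches levels $j'\equiv n_*+|q|\pmod{|u|}$, whereas the positive vertices produced by the translation argument sit over the identity at levels $k|u|$, i.e., in residue class $0$; there is no reason these classes agree. The paper resolves this with the combinatorial theorem on indecomposable Markov chains (Theorems~3.2 and~3.3 in the cited book of Chung): for every path $Q$ in the Cayley graph there is $N$ such that for all $n>N$, the path $Q$ extends to a cycle of length $n$ whenever cycles of length $n$ exist. Applying this to the forbidden path with $n=k|u|$ for large $k$, and using centrality (all paths into the vertex $(1_G,k|u|)$ have equal, hence positive, measure), one concludes that every cylinder has positive measure, contradicting degeneracy. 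So both the crux you flagged and the arithmetic of cycle lengths require tools your proposal does not supply.
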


\begin{proof}
It suffices to consider the case of an ergodic (degenerate central) measure. Assume that the probability of passing through a finite path~$P$ whose projection to the Cayley graph contains a cycle does not vanish. Since we deal with the dynamic graph of  a group, there is a path~$P'$ in~$\op{D}(G,S)$ that leads to the same vertex as~$P$ and begins with a path~$P''$ whose projection to the Cayley graph is a~cycle. By the centrality and additivity of the measure, $\nu(P'')\ge \nu(P') = \nu(P) > 0$. Applying the arguments from the proof\ft{Lemma~\ref{cor:Markov-time} deals with the case of a nondegenerate measure, but the arguments from its proof apply also to a degenerate measure provided that we consider paths on which the measure is concentrated.} of the homogeneity lemma~\ref{cor:Markov-time}, we deduce that the conditional measure on the subgraph~$\op{D}_v$ for the terminal vertex~$v$ of~$P''$ is isomorphic to the original measure~$\nu$. It follows that the original measure~$\nu$ takes a nonzero value not only on~$P''$, but also on all paths corresponding to powers of the cycle given by the projection of~$P''$. Thus, the original measure takes a~nonzero value on arbitrarily long paths that project to cycles. But it follows from standard results of the theory of Markov chains (see Theorems~3.2 and~3.3 in~\cite{ChKL64}) that for every path~$Q$ in the Cayley graph there is  $N\in{\mathbb N}$ such that for every $n>N$ the path~$Q$ can be extended to a cycle of length~$n$ provided that cycles of length~$n$ do exist. It remains to use the centrality of the measure to obtain a~contradiction with the assumed degeneracy.
\end{proof}

By the centrality of the measures under consideration, Theorem~\ref{lem:degenerate-simple} implies Corollary~\ref{cor:degenerate-almost-geodesic} which says that degenerate central measures are concentrated on paths whose projections to the Cayley graph are close to  \textit{geodesics}. For a rigorous description of the notion of ``being close to geodesics,'' we introduce the following notion of the \textit{defect} of a path.

\begin{definition}
The \textit{defect} of a finite path in the Cayley graph of a group is the difference between the length of this path and the length of the shortest path with the same beginning and end. The \textit{defect} of an infinite path is the supremum of the defects of its finite parts. Paths with zero defect are called \textit{geodesics}. The \textit{defect}  of a (finite or infinite) path in the dynamic graph of a group is the defect of its projection to the Cayley graph.
\end{definition}

\begin{corollary}\label{cor:degenerate-almost-geodesic}
Let $G$ be a finitely generated group and $S$ be a finite collection of generators of~$G$. Then there exists~$N\ge 0$ such that all degenerate central measures on the path space of the graph~$\op{D}(G,S)$ are concentrated on paths with defect at most~$N$.
\end{corollary}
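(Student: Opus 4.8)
The plan is to deduce the bound directly from Theorem~\ref{lem:degenerate-simple} together with the definition of centrality, by controlling not individual paths but the \emph{support vertices} of the measure. Let $\nu$ be a degenerate central measure, and call a vertex $(g,n)$ of $\op{D}(G,S)$ (where $g\in G$ is the projection and $n$ is the level) \emph{admissible} if $\nu$ assigns positive mass to the cylinder of paths passing through it. First I would observe that for $\nu$-almost every path every finite beginning ends at an admissible vertex, and that the defect of a beginning of length $n$ terminating at $g$ equals $n-|g|_G$; since the defect of an infinite path is by definition the supremum of the defects of its beginnings, it suffices to produce a constant $N=N(G,S)$ with $n-|g|_G\le N$ for every admissible vertex $(g,n)$. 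The crucial step is the following rigidity coming from centrality: by definition all finite paths from the root to $(g,n)$ carry equal $\nu$-mass, so if $(g,n)$ is admissible then \emph{every} such path has positive mass; but by Theorem~\ref{lem:degenerate-simple} any path whose projection contains a cycle is $\nu$-null, whence no walk of length $n$ from $1_G$ to $g$ in the Cayley graph can contain a cycle. Thus admissibility of $(g,n)$ forces every length-$n$ walk $1_G\to g$ to be simple.

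Next I would show that a large defect at an admissible vertex is impossible by exhibiting a non-simple walk of the required length. Since $S$ generates $G$ as a semigroup, the Cayley graph is strongly connected, so it has a well-defined period $p$ (the gcd of the lengths of closed walks based at $1_G$), and the map sending $g$ to the common residue modulo $p$ of the lengths of all walks $1_G\to g$ is well defined; in particular the defect $n-|g|_G$ of any such walk is a multiple of $p$. The set of lengths of closed walks at $1_G$ is nonempty (in a group each generator $s$ admits a positive word representing $s^{-1}$, which produces a relator) and closed under addition, hence is a numerical semigroup with gcd $p$; therefore there is a constant $N=N(G,S)$ such that every multiple of $p$ that is at least $N$ is realized as the length of a closed walk at $1_G$.

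Finally I would combine the two steps. Suppose an admissible vertex $(g,n)$ had defect $d=n-|g|_G\ge N$. Then $d$ is a multiple of $p$ with $d\ge N$, so there is a closed walk $\sigma$ of length $d$ based at $g$; concatenating a geodesic $1_G\to g$ with $\sigma$ yields a walk $1_G\to g$ of length exactly $n$ whose projection revisits $g$, that is, contains a cycle. This contradicts the simplicity established in the first step, so every admissible vertex has defect at most $N$, and hence $\nu$ is concentrated on paths of defect at most $N$; since $N$ depends only on $(G,S)$, the same bound serves all degenerate central measures. I expect the main obstacle to be the periodicity bookkeeping of the second step: one must verify that defects are forced to be multiples of $p$ and that all sufficiently large multiples of $p$ occur as closed-walk lengths---this is exactly where the Markov-chain periodicity results of \cite{ChKL64} used in the proof of Theorem~\ref{lem:degenerate-simple} enter---and one must take care that $S$ need not be symmetric, so that only directed (positive) walks may be used when assembling the auxiliary cycle.
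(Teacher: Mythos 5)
Your proof is correct and follows essentially the same route as the paper: both arguments rest on Theorem~\ref{lem:degenerate-simple}, the observation that defects are multiples of the period $p$ (the gcd of cycle lengths), the Markov-chain/numerical-semigroup fact that all sufficiently large multiples of $p$ occur as cycle lengths, and the construction of a competing walk (geodesic plus a cycle of length equal to the defect) which forces the original path to be null by centrality. Your bookkeeping via admissible vertices is just a reorganization of the paper's direct argument on finite paths, with the same constant $N$ (the paper's $JD$).
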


\begin{proof}
Let $D$ be the greatest common divisor of all lengths of cycles in the Cayley graph of the pair~$(G,S)$. Then, as we know from the combinatorics of Markov chains (see Theorem~3.3 in~\cite{ChKL64}), there exists $J\in{\mathbb N}_0$ such that for every $j\ge J$ the Cayley graph contains a cycle of length~$jD$. Note also that the defect of every path is divisible by~$D$ (extend the path and the corresponding geodesic to cycles by the same path). Let $P$ be a finite path in the dynamic graph (that begins at the marked vertex) with defect $\operatorname{def}(P)>JD$. Let $Q$ be a geodesic path in the Cayley graph connecting the same vertices as the projection of~$P$. Then, since $\operatorname{def}(P)>JD$, the Cayley graph contains a cycle~$Z$ of length  $|Z|=\operatorname{def}(P)$. We may assume without loss of generality that $Z$ starts and ends at the identity of the group. Then the path consisting of the cycle~$Z$ and the path~$Q$ has the same length and connects the same vertices as the projection of~$P$. By the centrality of the measure, Theorem~\ref{lem:degenerate-simple} implies that degenerate measures vanish on the path~$P$. It remains to set $N:=JD$.
\end{proof}

It turns out that one can easily give sufficient conditions on~$G$ and~$S$,  not too restrictive,  under which all degenerate central measures are concentrated on paths whose projections to the Cayley graph are geodesics.

\begin{corollary}\label{cor:degenerate-geodesic}
Let $G$ be a finitely generated group and $S$ be a finite collection of generators of~$S$. Assume that the Cayley graph~$\Gamma$ of the pair~$(G,S)$ contains a~cycle with length equal to the greatest common divisor of all lengths of cycles in~$\Gamma$ (this condition is satisfied, for example, if $S$ contains the identity of the group, or if $S$ is symmetric and all cycles in~$\Gamma$ are even). Then all degenerate central measures on the path space of the graph~$\op{D}(G,S)$ are concentrated on paths whose projections to~$\Gamma$ are geodesics.
\end{corollary}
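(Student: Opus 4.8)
The plan is to sharpen the argument behind Corollary~\ref{cor:degenerate-almost-geodesic}, where the defect bound $N=JD$ arose because cycles of length~$jD$ were only guaranteed for all sufficiently large~$j$. Under the present hypothesis there is a cycle of length \emph{exactly} $D$, so cycles of every positive multiple of~$D$ become available. This effectively replaces $J$ by~$1$ and lets me conclude that degenerate measures vanish on every path of nonzero defect, i.e., are concentrated on geodesics.

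First I would fix $D$ to be the greatest common divisor of all cycle lengths in~$\Gamma$, and let $C$ be the cycle of length~$D$ supplied by the hypothesis. Concatenating $C$ with itself $j$ times yields a cycle $C^j$ of length~$jD$ (in the repetition notation of Lemma~\ref{lem:homogen-2}), so $\Gamma$ contains a cycle of length~$jD$ for \emph{every} integer $j\ge 1$. Next I would recall, exactly as in the proof of Corollary~\ref{cor:degenerate-almost-geodesic}, that the defect of any finite path is divisible by~$D$: extending both the path and a geodesic with the same endpoints by a common return path produces two cycles whose lengths differ by the defect, and each cycle length is a multiple of~$D$.

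The main step is then a direct transcription of the replacement argument. Let $P$ be any finite path in the dynamic graph whose projection has nonzero defect. By divisibility, $\operatorname{def}(P)=jD$ for some $j\ge 1$, and by the previous paragraph there is a cycle $Z=C^j$ with $|Z|=\operatorname{def}(P)$, which after a translation I may take to start and end at the identity. The path formed by $Z$ followed by a geodesic connecting the endpoints of the projection of~$P$ has the same length and the same endpoints as that projection, yet contains a cycle. Invoking centrality together with Theorem~\ref{lem:degenerate-simple}, I conclude that every degenerate central measure vanishes on~$P$. Since this holds for all paths of positive defect, such measures are concentrated on paths of defect~$0$, that is, on geodesics.

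I do not anticipate a genuine obstacle: the only reason the general case needed the weaker ``eventually all multiples'' statement was the possible absence of short cycles, and the hypothesis removes exactly that gap by furnishing a cycle of the minimal admissible length~$D$ together with its powers. The two listed special cases serve as immediate sanity checks. If $S$ contains the identity, there is a loop, so $D=1$ and a cycle of length~$D$ exists trivially. If $S$ is symmetric and all cycles are even, then for any generator~$s$ the back-and-forth path $s\,s^{-1}$ is a cycle of length~$2$, which forces $D=2$ and again exhibits a cycle of length~$D$.
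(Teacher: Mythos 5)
Your proof is correct and takes essentially the same route as the paper: the paper's own proof of this corollary is just the observation that the hypothesized cycle of length $D$ puts one in the case $N=J=0$ of the proof of Corollary~\ref{cor:degenerate-almost-geodesic}, and your argument is precisely that replacement step (cycle of length $\operatorname{def}(P)$ plus geodesic, then centrality and Theorem~\ref{lem:degenerate-simple}) carried out explicitly with $J$ effectively equal to zero. Your verification of the two parenthetical special cases matches the intended reading as well.
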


\begin{proof}
By the assumptions, we are in the case~$N=J=0$ from the proof of Corollary~\ref{cor:degenerate-almost-geodesic}.
\end{proof}

The spectrum of cases in which degenerate central measures are concentrated on geodesics is by no means exhausted by Corollary~\ref{cor:degenerate-geodesic}. For example, in a commutative group the above property holds for any finite collection of generators (see~\cite{VM18}). Nevertheless, the following example shows that it may happen that the geodesics are not sufficient to 
describe the degenerate part of the absolute: a degenerate ergodic central measure is neither  necessarily concentrated on paths with zero defect, no  necessarily homogeneous.

\smallskip{\bf Example.}
Consider the Baumslag--Solitar group $BS(2,1)$ (this is the group with two generators~$a$ and~$b$ and one relation $ab^2a^{-1}=b$) with the collection of generators $S:=\{a,b,a^{-1},b^{-1}\}$. One can easily check that three infinite paths corresponding to three infinite words
$$
abbba^{+\infty},\; baabba^{+\infty},\; bbab^{-1}a^{+\infty}
$$
constitute a tail equivalence class, hence the uniform measure on these three paths is (central and) ergodic. At the same time, as can also be easily checked, the projections of these three paths to the Cayley graph are not geodesics, and the transition probabilities of the corresponding Markov process at the point ${g=abb=ba}$  are different for the third and fourth step.

\section{Examples of describing the absolute}

\subhead{6.1. The absolute of free groups and homogeneous trees.}
Ignoring the natural order of exposition, we begin with a more complicated and illustrative example of computing the absolute of a free group, and then consider the problem of computing the absolute in the order of increasing complexity of the structure of the group. This example clearly reveals the meaningfulness of the transition from the Poisson--Furstenberg boundary to the absolute.

Consider the homogeneous tree~$T_{q+1}$ with all vertices having valence~$q+1$.  The set $\mathcal{H}_{\min}$ of minimal positive harmonic (i.e., invariant under the Laplace operator) functions on~$T_{q+1}$ coincides with the family of functions of the form~$q^{-h(v)}$, where $h(v)$ is an arbitrary function on~$T_{q+1}$. One can easily check that for every (real or complex) number~$\alpha$, the power $(q^{-h(v)})^\alpha=q^{-\alpha h(v)}$ of a minimal harmonic function~$q^{-h(v)}$ is an eigenfunction of the Laplace operator with eigenvalue
\begin{equation*}
s_\alpha=\frac{q^\alpha+q^{1-\alpha}}{q+1}\,.
\end{equation*}
Let us present a complete description of the absolute (Theorem~2.1 in~\cite{VM15}): the absolute of the free group with respect to the natural generators is the direct product of the boundary of the free group and an interval:
$$
\mathcal{A}(T_{q+1})=\partial T_{q+1}\times [1/2,1].
$$
To obtain the main part of the absolute, one should consider the same product with the semiopen interval $[1/2,1)$.

Let us explain this formula. A pair~$\omega\times r$ from $\partial T_{q+1}\times [1/2,1]$ is interpreted as the Markov measure on the dynamic graph of~$T_{q+1}$ corresponding  to the Markov measure on paths in the tree~$T_{q+1}$ (which is the Cayley graph of the free group) with the following parameters: the probability~$p(g,gw)$ of the transition from an arbitrary vertex~$g$ to the vertex~$gw$, where $w$ is a generator, is equal to~$r$ if the edge leading to~$\omega$ is labelled by~$w$, and $(1-r)/q$ otherwise.

It is natural to regard the probability~$r$, the eigenvalue corresponding to the eigenfunction, and the rate defined below as functions of the number~$\alpha$ parametrizing the eigenfunctions of the Laplace operator. In this case, we obtain three characteristics of the Markov chain:

--- $r_\alpha$ is the above probability,

--- $s_\alpha$ is the eigenvalue,

--- $v_\alpha:=2r_\alpha-1$ is the rate at which the point moves toward the chosen point at infinity~$\omega$. 

They are given by the formulas
\begin{gather*}
s_\alpha=\frac{q^\alpha+q^{1-\alpha}}{q+1},\\
r_\alpha=\frac{q^\alpha}{q^\alpha+q^{1-\alpha}}=\frac{1}{1+q^{1-2\alpha}},\\
v_\alpha=\frac{2}{1+q^{1-2\alpha}}-1=\frac{1-q^{1-2\alpha}}{1+q^{1-2\alpha}}=
\frac{q^\alpha-q^{1-\alpha}}{q^\alpha+q^{1-\alpha}}\,.
\end{gather*}

One can easily use these formulas to deduce direct relationships between the parameters.

The table below shows the correspondences between the parameters at the critical points.

\setlength{\tabcolsep}{2pt}
\renewcommand{\arraystretch}{1.6}

\begin{table}[ht]
\begin{center}
\footnotesize
\begin{tabular}{ |c|c|c|c|c|c|c|c|c|c|c|c|c|c|c|c|c| } 
\hline
\tikz[align=center] \node {parameters\\ \VPH}; & & & \tikz[align=center] \node {simple\\ random \VPH \\ walk \VPH \Vqh }; & \tikz[align=center] \node {critical \Vph \\ point: \Vph \\ \VPH loss of \Vph \\ ergodicity \Vph}; & \tikz[align=center] \node {harmonic \VPH \\ measure \Vph \\ (values \Vph \\ of parameters) \Vph}; & & \\
\hline
$\alpha$ & $-\infty$ & \BDOTS & $0$ & $\frac12$ & $1$ & \BDOTS & $+\infty$ \\[2pt]
\hline
$r_\alpha$\VPH & $0$ & \BDOTS & $\frac{1}{q+1}$ & $\frac12$ & $\frac{q}{q+1}$ & \BDOTS & $1$ \\[2pt]
\hline
$v_\alpha$\VPH & $-1$  & \BDOTS & $\frac{1-q}{q+1}$ & $0$ & $\frac{q-1}{q+1}$ & \BDOTS & $1$ \\[2pt]
\hline
$s_\alpha$\VPH & $+\infty$ &\BDOTS & $1$ & $\frac{2\sqrt{q}}{q+1}$ & $1$ & \BDOTS & $+\infty$ \\[2pt]		
\hline
\end{tabular}
\label{tbl:table}
\end{center}
\end{table}

The value $\alpha=1/2$ corresponds to the critical point at which the Markov chain loses ergodicity (phase transition). At this point, eigenfunctions stop being minimal. More exactly, for every eigenvalue greater than~$2\sqrt{q}/(q+1)$ there are two collections of eigenfunctions: one of them, corresponding to $\alpha>1/2$, consists of minimal eigenfunctions, while the other one consists of eigenfunctions that are not minimal. This distinction is well illustrated by the change of the rate~$v_\alpha$. For $\alpha>1/2$ (which is the same as $r_\alpha>1/2$), almost every trajectory of the random walk approaches the corresponding point of the boundary with a linear rate~$v_\alpha$ (drift). For  $\alpha=1/2=r_{1/2}$, there is no such convergence. Negative rates correspond to moving away from the point at infinity. 

The value $\alpha=1$ corresponds to ergodic Markov measures associated with minimal harmonic functions. The value $\alpha=0$ corresponds to the Markov chain generated by the Laplace operator; more exactly, the transition probabilities of this chain determine the corresponding Laplace operator. Its decomposition 
 into ergodic components determines exactly the harmonic measure on the fiber~$\partial T_{q+1}\times q/(q+1)$.

\smallskip
\subhead{6.2. The absolute of finite groups.}

\begin{theorem}
The absolute of every finite group for every choice of a collection of generators~$S$ consists of a single point, which is the uniform measure on the paths of equal length. The transition probabilities of the corresponding Markov process on the Cayley graph are equal to~$1/|S|$ for all edges of the graph. This measure belongs to the Laplacian part, the degenerate part of the absolute being empty.
\end{theorem}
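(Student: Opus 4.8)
The plan is to reduce everything to the finite-dimensional spectral theory of the Laplacian and then feed the result into the two main theorems of the paper. Since $G$ is finite, the Laplace operator $\Delta_S$ is a genuine square matrix, namely the row-stochastic transition matrix $P$ of the simple random walk: writing $(\Delta_S f)(g)=\sum_h P_{g,h}f(h)$ with $P_{g,h}=|\{s\in S:gs=h\}|/|S|$, stochasticity ($\sum_h P_{g,h}=1$) is exactly the statement that the constant function is a positive eigenfunction with eigenvalue~$1$. First I would record irreducibility of $P$: because $S$ generates $G$ as a semigroup, for any $g,h$ the element $g^{-1}h$ is a product of generators, so $h$ is reachable from $g$ by right multiplications; hence the directed Cayley graph is strongly connected and $P$ is irreducible.

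The key step is then the Perron--Frobenius theorem for irreducible nonnegative matrices: the Perron eigenvalue (here $1$) is the \emph{only} eigenvalue admitting a positive eigenvector, it is simple, and its eigenvector is, up to scaling, the constant function. Consequently the constant function is the unique positive eigenfunction of $\Delta_S$ up to proportionality, and it is automatically minimal, the eigenspace for eigenvalue~$1$ being one-dimensional. Thus the set of classes of proportional positive minimal eigenfunctions is a single point, and by Claim~1 of Theorem~\ref{th:measures-functions} the Laplacian (nondegenerate) part of the absolute is a single point as well. Substituting the constant function into the defining formula~\eqref{eq:nuf-gen} gives $p(g,gs)=f(gs)/\sum_{t\in S}f(gt)=1/|S|$ for all $g$ and $s$, so this point is exactly the uniform measure; since these probabilities are strictly positive, every cylinder set has positive measure, confirming that the measure is nondegenerate and lies in the Laplacian part.

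It remains to show the degenerate part is empty, and here finiteness of $G$ does the work through Theorem~\ref{lem:degenerate-simple}. Any infinite path in the dynamic graph projects to an infinite sequence of vertices of the finite Cayley graph, which by the pigeonhole principle must revisit a vertex and hence contain a cycle; so there are no infinite paths whose projection is cycle-free. Since Theorem~\ref{lem:degenerate-simple} forces every degenerate central measure to be concentrated on precisely such (non-existent) paths, no degenerate central measure can be a probability measure, and the degenerate part is empty. Combining the three conclusions, the whole absolute coincides with its Laplacian part and consists of the single uniform measure, with all transition probabilities equal to $1/|S|$.

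The main obstacle, and really the only nonroutine input, is the Perron--Frobenius step: one must guarantee that no positive eigenfunction exists for any eigenvalue other than~$1$. This is the uniqueness-of-positive-eigenvector assertion of Perron--Frobenius for irreducible nonnegative matrices, and it rests entirely on the irreducibility of $P$, which in turn is exactly the hypothesis that $S$ generates $G$ as a semigroup. Everything else (computing the transition probabilities, checking nondegeneracy, and emptying the degenerate part via pigeonhole) is then immediate.
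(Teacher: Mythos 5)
Your proof is correct, but it reaches the conclusion about the Laplacian part by a genuinely different route than the paper. The paper does not touch eigenfunctions directly: it first extends all constructions to \emph{systems of generators with multiplicities} (multigraphs), observes that for the one-element group the whole simplex of central measures is a single point, and then applies Theorem~\ref{th:ext-FC} to the epimorphism $G\to\{1\}$ --- the kernel $G$ is vacuously totally distorted (a finite group has no infinite cyclic subgroups) and all its conjugacy classes are finite. The detour through multiplicities is forced because Theorem~\ref{th:ext-FC} as stated requires $\phi$ to be injective on~$S$, which fails for the map to the trivial group; the paper only asserts that the needed generalization is ``direct.'' Your argument sidesteps this entirely: you work with the honest hypothesis of Theorem~\ref{th:measures-functions} (which is proved for arbitrary finitely generated groups, hence for finite ones), and you supply the classification of positive eigenfunctions by finite-dimensional Perron--Frobenius theory --- irreducibility of the transition matrix coming exactly from $S$ generating $G$ as a semigroup, so the constants are the unique positive eigenfunctions and are minimal. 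This is more elementary and self-contained, and it yields the transition probabilities $1/|S|$ by direct substitution into~\eqref{eq:nuf-gen}; what it does not give is the paper's broader point that finite groups are just an instance of the quotient phenomenon of Section~4. Your treatment of the degenerate part (pigeonhole forcing cycles, then Theorem~\ref{lem:degenerate-simple}) coincides with the paper's.
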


\begin{proof}
Let us introduce the notion of a \textit{system of generators}: by a \textit{system of generators} of a group~$G$ we mean a (finite, in our case) set~$S$ and a mapping $S\to G$ (not necessarily an embedding) whose image generates~$G$ as a semigroup. It is natural to interpret systems of generators as collections of generators with multiplicities. All constructions of this paper can be obviously extended to the case of a group with a chosen system of generators: the notion of Cayley graph (in this case, it may have multiple edges), dynamic graph, central measure, absolute,~etc. For the one-element group with a system of generators of any cardinality, the whole simplex of central measures consists, obviously, of one measure described in the statement of the theorem. Considering the epimorphism from a finite group to the one-element group, we obtain, applying Theorem~\ref{th:ext-FC} (more exactly, its direct generalization to the case of collections of generators with multiplicities) that the Laplacian absolute of a finite group consists of this measure. The degenerate part of the absolute of a finite group is empty by Theorem~\ref{lem:degenerate-simple}, since every infinite path in a finite graph contains cycles.
\end{proof}

\subhead{6.3. The absolute of commutative groups.}
The following results are obtained in~\cite{VM18}.

\begin{theorem}[on the absolute of commutative groups and semigroups]
\label{th:Levyprocess-AM}
For every commutative semigroup and an arbitrary finite collection of generators, the set of ergodic central measures  {\rm(}i.e., the absolute{\rm)} coincides with the set of central measures that give rise to Markov chains with independent identically distributed increments. Thus, the absolute is in a bijective correspondence with the collection of all measures on the set of generators that determine Markov chains with this centrality property.
\end{theorem}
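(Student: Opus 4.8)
The plan is to recast centrality as \emph{exchangeability} of the increment sequence and then to invoke de Finetti's theorem. Write a path in $\op{D}(G,S)$ as a word $(\xi_1,\xi_2,\dots)$ in the alphabet~$S$, so that the vertex reached at level~$n$ is the product $X_n=\xi_1\cdots\xi_n$ (with $X_0$ the marked vertex). The key structural observation is that, because $G$ is commutative, a finite permutation of $(\xi_1,\dots,\xi_N)$ changes neither $X_N$ nor any later vertex; hence it carries every path to an $N$-equivalent one and is therefore an automorphism of the tail filtration in the sense of Section~2.5. Using the vertex-based definition of a central measure (all finite paths from the marked vertex to a given vertex carry equal measure), the permuted initial segment and the original one have the same length and the same endpoint, so they have equal $\nu$-probability; together with the common tail this shows that \emph{every} central $\nu$ is invariant under all finite permutations of the $\xi_i$. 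Consequently the increment process of any central measure is exchangeable, and the set of central measures is a convex subset of the Bauer simplex $\mathcal{E}$ of exchangeable laws on $S^{\infty}$, whose extreme points are, by de Finetti's theorem, exactly the i.i.d.\ product measures $\pi^{\otimes\infty}$ with $\pi$ a probability distribution on~$S$.

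For the inclusion ``i.i.d.\ central $\Rightarrow$ ergodic,'' I would argue purely by heredity of extreme points. If $\nu=\pi^{\otimes\infty}$ is i.i.d.\ and central, then $\nu$ is an extreme point of the exchangeable simplex $\mathcal{E}$; since the central measures form a convex subset of $\mathcal{E}$ containing $\nu$, any representation $\nu=\tfrac12(\nu_1+\nu_2)$ with $\nu_1,\nu_2$ central is a fortiori a representation inside $\mathcal{E}$, whence $\nu_1=\nu_2=\nu$. Thus $\nu$ is an extreme point of the central simplex $\mathcal{C}(\op{D})$, i.e.\ ergodic. (Note that this direction needs \emph{no} separate triviality-of-tail computation and covers the degenerate case, where $\pi$ is supported on a proper subset of~$S$, on the same footing.)

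For the converse, ``ergodic central $\Rightarrow$ i.i.d.,'' I would take the de Finetti decomposition of an ergodic central $\nu$, namely the conditioning on the $\sigma$-algebra $\mathcal{E}_\infty$ generated by the limiting empirical distribution $\pi_\omega=\lim_n\frac1n\sum_{k\le n}\delta_{\xi_k}$ of the increments; de Finetti gives $\nu=\int \pi_\omega^{\otimes\infty}\,d\nu(\omega)$ with each component i.i.d. Since $\mathcal{E}_\infty$ is constant on tail-equivalence classes (two tail-equivalent paths differ in only finitely many increments), it is invariant under all tail automorphisms, so conditioning on it preserves centrality; hence the components $\pi_\omega^{\otimes\infty}$ are themselves central. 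Extremality of $\nu$ in $\mathcal{C}(\op{D})$ then forces this decomposition to be trivial, so $\nu=\pi^{\otimes\infty}$ is i.i.d. Identifying each such measure with its increment law $\pi$ yields the asserted bijection between $\mathcal{A}(G,S)$ and the set of distributions $\pi$ on $S$ for which $\pi^{\otimes\infty}$ is central, the defining constraint being that any two equal-length words in $S$ with the same product carry the same $\pi$-weight. For the nondegenerate part one can alternatively read the same conclusion off Theorem~\ref{th:measures-functions} together with the fact that the minimal positive eigenfunctions of $\Delta_S$ on a commutative group are the positive characters $g\mapsto\chi(g)$ (Choquet--Deny), for which \eqref{eq:nuf-gen} gives transition probabilities $\chi(s)/\sum_{t}\chi(t)$ independent of position.

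The step I expect to be the main obstacle is the one that makes the ergodic direction go through: verifying carefully that the de Finetti ergodic components of a central measure are again central, equivalently that conditioning on the exchangeable $\sigma$-algebra $\mathcal{E}_\infty$ commutes with the full group of tail automorphisms and not merely with the permutation subgroup. This is precisely the place where commutativity of $G$ and finiteness of $S$ are essential, and where one must be attentive to the boundary (degenerate) product measures $\pi^{\otimes\infty}$ with $\pi$ vanishing on part of~$S$, so that the argument applies uniformly to the Laplacian and the degenerate parts of the absolute.
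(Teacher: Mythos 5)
Your proposal addresses a statement for which this paper contains no proof at all: Theorem~\ref{th:Levyprocess-AM} is quoted from Part~I of the series~\cite{VM18}, where the result is presented precisely as a generalization of de~Finetti's theorem (the free commutative semigroup, where centrality and exchangeability coincide, recovers the classical statement). So your route---centrality as a strengthening of exchangeability, then de~Finetti/Hewitt--Savage plus extreme-point arguments---is in the spirit of the source, and as a self-contained argument it is correct. The two pillars hold up: (i) in a commutative (semi)group a finite permutation of increments changes neither the endpoint nor any later vertex, and the vertex form of centrality, applied to the full cylinders $[wu]$ versus $[w'u]$ rather than only to the prefixes (this is what your phrase ``together with the common tail'' glosses over, but it is immediate since $wu$ and $w'u$ again end at the same vertex), gives invariance under all finite permutations; (ii) the heredity argument---an i.i.d.\ central measure is extreme in the exchangeable simplex, hence a fortiori extreme in the smaller convex set $\mathcal{C}(\op{D})$---is clean, and it does cover degenerate product measures with no extra work. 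The step you flag as the main obstacle is the right one, and it closes exactly as you suggest: the tail equivalence relation is generated by \emph{countably many} prefix-replacement transformations $T_{w,w'}$ (replace an initial word $w$ by a word $w'$ of the same length and the same product); each preserves any central measure by definition and fixes the $\sigma$-algebra $\mathcal{E}_\infty$ of the limiting empirical distribution, so the disintegration over $\mathcal{E}_\infty$ is a.s.\ invariant under each $T_{w,w'}$, hence a.s.\ central, and Bauer's characterization of extreme points (only the trivial barycentric representation) finishes ``ergodic $\Rightarrow$ i.i.d.'' Two caveats: your closing alternative via Theorem~\ref{th:measures-functions}, formula~\eqref{eq:nuf-gen}, and Choquet--Deny applies to groups, not to general commutative semigroups, and in any case only reproduces the nondegenerate (Laplacian) part, so it is a consistency check rather than a substitute for the de~Finetti argument; and the measurability/a.e.\ bookkeeping in the conditioning step, which you only sketch, is genuinely needed but entirely standard.
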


\begin{theorem}[on the topology of the absolute of commutative groups]\label{th:abel-top-group}
The absolute of a finitely generated commutative group with respect to any finite collection of generators is homeomorphic to a closed disk with dimension equal to the rank of the group. The Laplacian part of the absolute corresponds to the interior of the disk. 
\end{theorem}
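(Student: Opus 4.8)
The plan is to pass from measures to drift vectors and read off the topology from a convex polytope. First I would invoke Theorem~\ref{th:Levyprocess-AM} to identify the absolute $\mathcal{A}(G,S)$ with the compact set $\mathcal{P}\subseteq\Sigma$ of probability vectors $(p_s)_{s\in S}$ on the generators that satisfy the centrality condition, where $\Sigma=\{p:\ p_s\ge 0,\ \sum_s p_s=1\}$ carries its usual topology (which agrees with the weak topology on the associated i.i.d.\ measures, since for such measures the first-step probabilities determine the whole chain). Writing $G=\mathbb{Z}^r\times F$ with $F$ finite and letting $\bar s\in\mathbb{Z}^r$ be the image of $s$ under $G\to\mathbb{Z}^r$, I would introduce the \emph{drift map} $\mu\colon\Sigma\to\mathbb{R}^r$, $\mu(p)=\sum_s p_s\,\bar s$. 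Because $S$ generates $G$, the points $\bar s$ affinely span $\mathbb{R}^r$, so $P:=\operatorname{conv}\{\bar s:s\in S\}$ is an $r$-dimensional convex polytope, hence homeomorphic to a closed $r$-disk with $\operatorname{int}P$ corresponding to the open disk. The whole theorem then reduces to showing that $\mu|_{\mathcal{P}}\colon\mathcal{P}\to P$ is a homeomorphism carrying the Laplacian part onto $\operatorname{int}P$ and the degenerate part onto $\partial P$.

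For the interior I would argue directly from the centrality relations. Centrality is equivalent to $\prod_s p_s^{m_s}=1$ for every $m$ in the relation lattice $R=\{m\in\mathbb{Z}^S:\ \sum_s m_s=0,\ \sum_s m_s s=0\ \text{in}\ G\}$. On the open simplex (all $p_s>0$) taking logarithms linearizes this, and a short computation of $R^{\perp}$ — after multiplying any torsion relation by $|F|$ to land back in $R$, so that the torsion conditions drop out over $\mathbb{R}$ — shows that $(\log p_s)_s$ ranges exactly over $\{(\langle t,\bar s\rangle+c)_s:\ t\in\mathbb{R}^r,\ c\in\mathbb{R}\}$. Thus the nondegenerate part $\mathcal{P}^{\circ}$ is precisely the image of the exponential family $\Phi(t)=\bigl(e^{\langle t,\bar s\rangle}\big/\sum_{s'}e^{\langle t,\bar{s'}\rangle}\bigr)_s$, an $r$-dimensional patch; this matches Theorem~\ref{th:measures-functions}, the minimal positive eigenfunctions of $\Delta_S$ being the positive characters $\chi_t(g)=e^{\langle t,\bar g\rangle}$, $t\in\mathbb{R}^r$. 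Finally $\mu\circ\Phi(t)=\nabla_t\log\!\bigl(\sum_s e^{\langle t,\bar s\rangle}\bigr)$ is the gradient of a strictly convex function, which by the standard theory of exponential families is a diffeomorphism of $\mathbb{R}^r$ onto $\operatorname{int}P$. This already yields that the Laplacian part is the open disk.

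For the full statement I would note that surjectivity of $\mu|_{\mathcal{P}}$ is automatic: $\mu(\mathcal{P})$ is compact, contains $\operatorname{int}P$, and is therefore closed and equal to $\overline{\operatorname{int}P}=P$. So it remains to (a) locate the degenerate measures and (b) prove injectivity. For (a) I would use Theorem~\ref{lem:degenerate-simple} and Corollary~\ref{cor:degenerate-geodesic}: a degenerate central measure is concentrated on geodesic rays, and in an abelian group the generators that occur with positive probability along a geodesic ray are exactly those maximizing some linear functional, i.e.\ those lying on a common proper face $\mathcal{F}\preceq P$. Hence such a $p$ is supported on $S_{\mathcal{F}}=\{s:\bar s\in\mathcal{F}\}$ and $\mu(p)\in\mathcal{F}$, so degeneracy corresponds to $\partial P$. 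Repeating the construction with $S_{\mathcal{F}}$ in place of $S$ and inducting on $\dim\mathcal{F}$ would show that $\mu$ maps the centrality-compatible measures supported on $S_{\mathcal{F}}$ onto $\mathcal{F}$, sending the full-support ones to the relative interior.

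The main obstacle is exactly the injectivity of $\mu|_{\mathcal{P}}$ and the clean stratification behind it. One must verify that the support of every centrality-compatible measure is precisely the generator set of a \emph{single} face of $P$, so that the drift pins down the face (the relative interiors of distinct faces are disjoint), and that within each face no two distinct centrality-compatible distributions share a drift. In the interior this is the strict convexity of the log-partition function; on the lower-dimensional strata it requires knowing that the relevant relation sublattice is exactly the one inherited from $R$, which is where the geometric input of Theorem~\ref{lem:degenerate-simple} (degeneracy forces geodesics, hence extremal, face-confined support) does the essential work. Once injectivity is established, $\mu|_{\mathcal{P}}$ is a continuous bijection from the compactum $\mathcal{P}$ onto the Hausdorff space $P$, hence a homeomorphism, and the identification of $\operatorname{int}P$ with the Laplacian part and $\partial P$ with the degenerate part completes the proof.
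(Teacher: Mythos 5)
First, a point of order: this paper does not prove Theorem~\ref{th:abel-top-group} at all --- Section~6.3 explicitly imports it from \cite{VM18} (``The following results are obtained in~\cite{VM18}''), so there is no in-paper proof to compare yours against, and I judge the proposal on its own merits. Your architecture is sound and very likely close to the intended one: Theorem~\ref{th:Levyprocess-AM} does reduce the absolute to the compact set $\mathcal{P}$ of centrality-compatible step distributions; your computation of $R^{\perp}$ (with the torsion relations multiplied into $R$) correctly identifies the nondegenerate stratum with the exponential family $\Phi(t)$; and the Legendre/log-partition argument correctly shows that the drift maps this stratum diffeomorphically onto $\operatorname{int}P$, after which compactness of $\mathcal{P}$ would upgrade a continuous bijection to a homeomorphism.

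The genuine gap is the step you flag yourself and then try to discharge with Theorem~\ref{lem:degenerate-simple} and Corollary~\ref{cor:degenerate-geodesic}: proving that the support of every degenerate $p\in\mathcal{P}$ is \emph{exactly} $S_{\mathcal{F}}=\{s:\bar s\in\mathcal{F}\}$ for a single face $\mathcal{F}$. Path geometry cannot carry this weight, for two reasons. First, inside this paper the unconditional statement ``degenerate $\Rightarrow$ concentrated on geodesics'' for commutative $(G,S)$ is not available: Corollary~\ref{cor:degenerate-geodesic} carries a gcd-of-cycle-lengths hypothesis, which can fail for commutative pairs (e.g.\ $G=\mathbb{Z}$, $S=\{2,-2,3\}$, where cycle lengths have gcd $1$ but there is no cycle of length $1$), and the remark removing that hypothesis in the commutative case is itself cited from \cite{VM18} --- circular if the goal is to prove the \cite{VM18} theorem. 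What Theorem~\ref{lem:degenerate-simple} gives unconditionally, namely absence of cycles, amounts only to $0\notin\operatorname{conv}\{\bar s: s\in \operatorname{supp}p\}$, which is far weaker than face-confinement (for $G=\mathbb{Z}$, $S=\{1,-1,2\}$, the set $T=\{1\}$ is cycle-free although $1\in\operatorname{int}P$).

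Second, and decisively: even granting geodesic concentration, it yields only the inclusion $\operatorname{supp}p\subseteq S_{\mathcal{F}}$, whereas injectivity of $\mu|_{\mathcal{P}}$ requires the reverse inclusion $S_{\mathcal{F}}\subseteq\operatorname{supp}p$, and no geodesic or cycle-free property can detect it. Take $G=\mathbb{Z}\times\mathbb{Z}/2\mathbb{Z}$, $S=\{(1,0),(1,1),(-1,0),(-1,1)\}$, $p=\delta_{(1,0)}$: every word over $\operatorname{supp}p$ projects to a geodesic and $\operatorname{supp}p$ sits inside the face $\{1\}$ of $P=[-1,1]$, yet $p^{\infty}$ is \emph{not} central, because $(1,0)+(1,0)=(1,1)+(1,1)$ in $G$ forces the equal-length words $(1,0)(1,0)$ and $(1,1)(1,1)$ to have equal probability. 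Excluding such measures --- equivalently, proving $S_{\mathcal{F}}\subseteq\operatorname{supp}p$, without which the disk picture itself would be false --- needs an argument of a different kind: given $s\in S_{\mathcal{F}}$, write a rational point of the relative interior of $\operatorname{conv}\{\bar t: t\in\operatorname{supp}p\}$ as a rational convex combination of face points that uses $\bar s$, clear denominators to obtain two equal-length multisets of generators with the same image in $\mathbb{Z}^r$, and then raise both words to the order of the torsion discrepancy so that they become equal in $G$; centrality across the support boundary then forces $p(s)>0$. This torsion-and-multiplicity mechanism (your $R^{\perp}$ computation handles torsion only in the full-support stratum) is the real content of the boundary half of the theorem, and it is absent from your proposal; without it the stratification and the injectivity of $\mu|_{\mathcal{P}}$ remain unproven. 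A smaller omission of the same nature: to show each $S_{\mathcal{F}}$ actually carries central measures filling $\operatorname{relint}\mathcal{F}$, ``repeating the construction'' is not literal (the set $S_{\mathcal{F}}$ generates only a semigroup); one needs the supporting-functional observation that a word over $S_{\mathcal{F}}$ can be equivalent only to words over $S_{\mathcal{F}}$, plus the rank computation of $R\cap\mathbb{Z}^{S_{\mathcal{F}}}$, with torsion handled as above.
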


\subhead{6.4. The absolute of the discrete Heisenberg group.}
The problem of describing the Laplacian part of the absolute of any finitely generated nilpotent group reduces to the  problem we have already solved of describing the Laplacian part of the absolute of its abelianization (Corollary~\ref{cor:nilpotent}). But the degenerate part of the absolute of a nilpotent group can differ significantly from the degenerate part of the absolute of its abelianization, and to describe it in the general case is a difficult problem. In a paper under preparation, we describe the structure of the absolute for the Heisenberg group with the standard collection of generators. In the following theorem we give a description of the absolute of the Heisenberg group from that paper in terms of measures on the space of infinite words in the alphabet of generators (this space~$S^\infty$ is in a bijective correspondence with the space of infinite paths in the dynamic graph).

\begin{theorem}\label{th:Heis}
{\rm1.} The absolute $\mathcal{A}(N_2,S)$ of the discrete Heisenberg group
\begin{equation*}
N_2=\langle x,y \mid [[x,y],x]=[[x,y],y]=1\rangle
\end{equation*}
with the collection of generators $S=\{x,y,x^{-1},y^{-1}\}$ is the union of a countable set of atomic measures and the set~$\mathcal{B}$ of Bernoulli measures~$\mu^\infty$ on~$S^\infty$ for which the generating measure~ $\mu$ on~$S$ satisfies the condition $\mu(x)\cdot\mu(x^{-1})=\mu(y)\cdot\mu(y^{-1})$.

{\rm2.} There is a natural bijection between the set~$\mathcal{C}$ of atomic measures from~$\mathcal{A}(N_2,S)$ and the set~$\mathcal{W}$ of words of the form $a^mb^nab^{+\infty}$, where $\{a,b\}$ is a pair of elements from~$S$ that are not inverse and $(m,n)\in{\mathbb N}_0\times{\mathbb N}_0$;  namely, for every measure~$\nu$ from~$\mathcal{C}$ there is a unique word~$W$ in~$\mathcal{W}$ such that $\nu$ is the uniform measure on the tail equivalence class of~$W$; the tail equivalence class in~$N_2$ of every word from~$\mathcal{W}$ is finite, and the uniform measure on this class belongs to~$\mathcal{C}$.
\end{theorem}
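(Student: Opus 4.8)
The plan is to prove the two claims by splitting the absolute into its Laplacian (nondegenerate) part and its degenerate part, and to match these against the two pieces in the statement: the nondegenerate members of $\mathcal{B}$ will come from the Laplacian part, while the degenerate part will produce both the remaining (boundary) members of $\mathcal{B}$ and the countable atomic family $\mathcal{C}$ described in Claim~2.

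For the Laplacian part I would invoke Corollary~\ref{cor:nilpotent}. Since $N_2$ is nilpotent and the abelianization $\phi\colon N_2\to\mathbb{Z}^2$ is injective on $S$, the induced path-space isomorphism $\phi_*$ carries the Laplacian absolute of $(N_2,S)$ onto that of $(\mathbb{Z}^2,\phi(S))$, where $\phi(S)=\{\pm e_1,\pm e_2\}$. By Theorems~\ref{th:Levyprocess-AM} and~\ref{th:abel-top-group} the absolute of $\mathbb{Z}^2$ consists exactly of the central Bernoulli measures $\mu^\infty$, its Laplacian (interior) part being the nondegenerate ones. Writing out the centrality condition for $\mathbb{Z}^2$ — a length-$n$ word with $a,b,c,d$ occurrences of $x,x^{-1},y,y^{-1}$ reaches $(a-b,c-d)$ with probability $\mu(x)^a\mu(x^{-1})^b\mu(y)^c\mu(y^{-1})^d$, and comparing it with the word obtained by the length- and endpoint-preserving move $(a,b,c,d)\mapsto(a+1,b+1,c-1,d-1)$ — forces precisely $\mu(x)\mu(x^{-1})=\mu(y)\mu(y^{-1})$. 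Since $\phi_*$ is the identity on $S^\infty$ and hence preserves the Bernoulli property, this identifies the nondegenerate part of the absolute of $N_2$ with the nondegenerate measures in $\mathcal{B}$.

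For the degenerate part I would first reduce to geodesics. Every word representing the identity of $N_2$ abelianizes to $0\in\mathbb{Z}^2$, hence has equal numbers of $x$ and $x^{-1}$ and of $y$ and $y^{-1}$ and so has even length; thus all cycles are even, $S$ is symmetric, and Corollary~\ref{cor:degenerate-geodesic} applies, so every degenerate central measure is concentrated on paths projecting to geodesics. The task then becomes the classification of ergodic central measures supported on geodesic rays of $N_2$, which I would organize by the asymptotic direction of the ray and split into a non-atomic and an atomic case. The non-atomic measures should be the degenerate members of $\mathcal{B}$ supported on two non-inverse generators, say $\{x,y\}$: such a $\mu$ gives, via the abelianization, equal probability to positive geodesic words with the same letter-counts, and one checks that at each level of $\op{D}(N_2,S)$ the words reaching a fixed vertex are either all positive (same counts, hence equal measure) or all nonpositive (measure zero), so $\mu^\infty$ is central on $N_2$ and lies on the boundary of $\mathcal{B}$; a limiting argument from the nondegenerate case gives ergodicity.

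The atomic measures are the heart of Claim~2. For a word $W=a^mb^nab^{+\infty}\in\mathcal{W}$ the element reached at level $m+n+1+k$ is, after reduction in $N_2$, of the form $a^{m+1}b^{\,n+k}z^{c}$ (with $z$ the central commutator) whose central coordinate $c$ is \emph{bounded}, independent of $k$, while the displacement along the $b$-axis grows linearly; I would compute its tail equivalence class directly and show that the number of geodesic representations stays bounded as $k\to\infty$, so the class is finite and its uniform measure is central and ergodic, yielding a member of $\mathcal{C}$. Conversely, an atomic ergodic central measure is uniform on a finite tail class, which forces the number of geodesic representations of the elements along its ray to remain bounded; analysing the Heisenberg geodesic count I would show that this boundedness forces the ray to run asymptotically along a coordinate axis enclosing bounded area, with normal form exactly $a^mb^nab^{+\infty}$ for a unique non-inverse pair $\{a,b\}$ and unique $(m,n)$. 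The main obstacle is precisely this last geometric input: a sufficiently sharp control of the geodesics of the discrete Heisenberg group — how many geodesic words represent a given element and how this count grows along rays — is delicate, and it is exactly what separates the boundedly represented (atomic) directions from the generic directions carrying the continuous family $\mathcal{B}$.
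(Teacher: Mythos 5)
First, a point of calibration: this is one of the statements the paper does \emph{not} prove --- Section~1 and Section~6.4 say explicitly that the description of the absolute of the Heisenberg group is taken from a paper under preparation and that its proof ``will be given in a separate paper.'' So there is no in-paper proof to compare yours against, and your proposal has to stand on its own. Its first half does: reducing the Laplacian part via Corollary~\ref{cor:nilpotent} to $({\mathbb Z}^2,\phi(S))$, invoking Theorems~\ref{th:Levyprocess-AM} and~\ref{th:abel-top-group}, and extracting the condition $\mu(x)\mu(x^{-1})=\mu(y)\mu(y^{-1})$ from centrality by the letter-count swap is exactly the intended use of the paper's machinery and is correct; so is the reduction of the degenerate part to geodesics (every relator of $N_2$ abelianizes to $0$, hence all cycles are even and Corollary~\ref{cor:degenerate-geodesic} applies).

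The gaps are concentrated in the degenerate part, which is the actual content of the theorem. (a)~Ergodicity of the degenerate Bernoulli measures (those supported on a non-inverse pair such as $\{x,y\}$) cannot be obtained by ``a limiting argument from the nondegenerate case'': the ergodic central measures are the extreme points of a Choquet simplex, and extreme points need not be closed under weak limits, so a limit of ergodic measures may fail to be ergodic. Moreover, ergodicity over $N_2$ is strictly stronger than ergodicity of the same Bernoulli measure over ${\mathbb Z}^2$: the tail equivalence on the path space of $\operatorname{D}(N_2,S)$ is finer, since two positive words of equal length and equal letter counts are equal in $N_2$ only if they also share the area (commutator) statistic; hence the tail $\sigma$-algebra whose triviality must be shown is larger, and a de Finetti-type exchangeability argument adapted to this finer relation is genuinely needed. (b)~The converse classification --- that \emph{every} degenerate ergodic central measure is either such a Bernoulli measure or the uniform measure on the finite tail class of some word $a^mb^nab^{+\infty}$, with the bijection of Claim~2 --- is only sketched, and you flag yourself that the required combinatorics of Heisenberg geodesics (how many positive words of given length, letter counts, and area represent a given element, and how this count behaves along an infinite ray) is missing. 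That input is precisely the subject of the deferred companion work (compare the cited paper on infinite geodesics in the discrete Heisenberg group), and without it both the countability of the atomic part in Claim~1 and all of Claim~2 remain unproven. Your forward computation for $W=a^mb^nab^{+\infty}$ (bounded central coordinate; class size stabilizing, e.g.\ at the number of partitions of $n$ into at most $m+1$ parts) is the right start, but it handles only the easy direction.
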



\renewcommand{\refname}{{\normalsize\sc References}}

\end{document}